\newtheorem{thm}{Theorem}[subsection]
\newtheorem{theorem}[thm]{Theorem}
\newtheorem{conj}[thm]{Conjecture}
\newtheorem{lemma}[thm]{Lemma}
\newtheorem{corollary}[thm]{Corollary}
\newtheorem{proposition}[thm]{Proposition}
\numberwithin{equation}{section} \theoremstyle{definition}
\newcommand{\I}{\mathcal{I}}
\newcommand{\m}{^{\text{-1}}}
\def \Z{\mathbb{Z}}
\def \Ga {\Gamma}
\def \ga {\gamma}
\def \Alt{\mathrm{Alt}}
\def \Stab {\mathrm{Stab}}
\def \tr {\mathrm{tr}}
\def \scrB {\mathscr{B}}
\def \I {I}
\def \scrA {\mathscr{A}}
\def \Z{\mathbb{Z}}
\def \Z{\mathbb{Z}}
\def \Ga {\Gamma}
\def \ga {\gamma}
\def \tr {\mathrm{tr}}
\def \I {I}
\def \scrA {\mathscr{A}}
\def \scrB {\mathscr{B}}
\def \scrP {\mathscr{P}}
\def \scrC {\mathscr{C}}
\def \scrT {\mathscr{T}}
\def \scrL {\mathscr{L}}
\def \Aut {\mathrm{Aut}}
\def \Conv {\mathrm{Conv}}
\def \Alt{\mathrm{Alt}}
\def \Stab {\mathrm{Stab}}
\begin{document}

\pagestyle{myheadings} \markright{Twisted Poincare Series and Zeta functions on finite quotients of buildings}

\title{Twisted Poincare Series and Zeta functions on finite quotients of buildings}
\author{Ming-Hsuan Kang, and Rupert M\MakeLowercase{c}Callum}
\address{Ming-Hsuan Kang\\ Department of Applied Mathematics\\National Chiao-Tung University\\
Hsinchu, Taiwan} \email{\tt mhkang@math.nctu.edu.tw}
\address{Rupert M\MakeLowercase{c}Callum \\ \\ Department of Mathematics\\ University of T\"{u}bingen\\ T\"{u}bingen, Germany}
\email{\tt {rupert-gordon.mccallum@uni-tuebingen.de}}
\thanks{The research of the first author is supported by the MOST grant 104-2115-M-009-006-MY2. {The research of the second author is supported by the DFG grant DE 436/10-1.}}
\date{}
\maketitle

{\bf Abstract:} In the case where $G=$SL$_{2}(F)$ for a non-archimedean local field $F$ and $\Gamma$ is a discrete torsion-free cocompact subgroup of $G$, there is a known relationship between the Ihara zeta function for the quotient of the Bruhat-Tits tree of $G$ by the action of $\Gamma$, and an alternating product of determinants of twisted Poincar\'e series for parabolic subgroups of the affine Weyl group of $G$. We show how this can be generalised to other split simple algebraic groups of rank two over $F$, and formulate a conjecture about how this might be generalised to groups of higher rank. 

\tableofcontents

\section{Introduction}
The classical Ihara zeta function \cite{Iha} is a counting function associated to a discrete torsion-free cocompact subgroup $\Ga$ of $G=$PGL$_2(F)$, where $F$ is a non-archimedean local field with $q$ elements in its residue field, defined as follows:
$$ Z(\Gamma,u) = \prod_{[\ga]}(1-u^{\ell(\ga)})^{-1}.$$
Here $[\ga]$ runs through all primitive conjugacy classes in $\Ga$ and {$\ell$ is the length function, and we always select an element $\ga$ of the conjugacy class which minimises $\ell(\ga)$}. On the other hand, one can also regard  $Z(\Gamma,u)$ as a geometric counting function associated to the quotient of the Bruhat-Tits tree of $G$ by the action of $\Ga$, denoted by $\scrB_\Ga$, such that 
$$ Z(\scrB_\Ga,u) = \prod_{[c]}(1-u^{\ell(c)})^{-1}.$$
Here $[c]$ runs through all equivalence classes of primitive closed geodesic walks in $\scrB_\Ga$ and $\ell(c)$ is the path length of $c$ in graph theory.

Ihara showed that the zeta function is indeed a rational function given by the following formula
\begin{equation} \label{Ih1}
 Z(\scrB_\Ga,u) = \frac{(1-u^2)^{\chi(\scrB_\Ga)}}{\det(1-Au+ qu^2)}.
\end{equation}
Here $\chi(\scrB_\Ga)$ is the Euler characteristic and $A$ is the adjacency matrix of the finite graph $\scrB_\Ga$. 

It is natural to ask if one can generalize Ihara's result to other reductive groups. However, there is no canonical way to define Ihara zeta functions on finite quotients of higher-dimensional buildings.
Thus one must find a new interpretation of Eq.(\ref{Ih1}). 

For example, the term ${\det(1-Au+ qu^2)}^{-1}$ can be regarded as an unramified Langlands $L$-function associated to the representation $L^2(\Ga \backslash G)$ of $G$. 
In this case, the right hand side of Eq.(\ref{Ih1}) is known in general and it remains to figure out the left-hand side in terms of geometric zeta functions. For this viewpoint, see \cite{KL1} and \cite{KL2} for the case of PGL$_3$,
\cite{KLW} for the cases of  PGL$_3$ and  PGSP$_4$, and \cite{DK} for the case of PGL$_n$ over the local field {such that the} residue field is the field with one element.

On the other hand, Hashimoto \cite{Ha1} showed that the Ihara zeta function can be expressed in terms of the edge adjacency operator in an obvious way. In this case, Eq.(\ref{Ih1}) becomes a relation between two parahoric Hecke operators (which are the edge adjacency operator and the vertex adjacency operator). In general, one can obtain a similar identity by comparing eigenvalues of proper parahoric operators using representation theory. See \cite{KL1} and \cite{FLW} for the case of PGL$_3$ and GSP$_4$, respectively, from the view point of this approach.

Besides, when $G=$SL$_2(F)$, let $W$ be the affine Weyl group of $G$ with the standard generating set $S$ of order two. For each parabolic subgroup $W_I$ generating by $I \subset S$, one can consider its Poincar\'{e} series $W_I(\rho,u)$
twisted by the representation $\rho= L^2(\Ga \backslash G)$. In this case, Hashimoto's result implies that
\begin{equation} \label{Ih2} 
 Z(\scrB_\Ga,u) = \prod_{I \subset S} \det W_I(\rho,u)^{(-1)^{|I|+|S|}}.
 \end{equation}
(See Section 2 for more details.) One of the main {goals of} this paper is to generalize the above result to simple split algebraic groups of rank two. 

The paper is organized as follows. In Section 2, we review the definition of Poincar\'{e} series and their twistings and state Hashimoto's interpretation of Ihara's formula. Especially, the result is based on a length-preserving decomposition of the affine Weyl group of SL$_2$.
In Section 3, we study such decompositions for affine Weyl groups of rank two and discuss their relation with geometric zeta functions in Section 4. In Section 5, we state our conjecture about higher-dimensional cases and {discuss} the right-hand side of Eq.(\ref{Ih2}) for un-twisted Poincar\'{e} series.

\subsection*{Acknowledgements}
The authors would like to Professor Anton Deitmar and Professor Jiu-Kang Yu for their valuable discussions. The research was mainly performed while the first author was visiting Professor Deitmar in University of T\"{u}bingen. The authors would like to thank the university for its hospitality.

\section{Twisted Poincar\'{e} series and Ihara zeta functions} \label{sec-ranktwo}
\subsection{Poincar\'{e} series}

Let $(W,S)$ be a Coxeter system with a generating set $S$ consisting of elements of order two.
For $w \in W$, let $\ell(w)$ be the shortest length of a word consisting of elements of $S$ whose product is equal to $w$.
The Poincar\'{e} series associated to $(W,S)$ is a power series  with integer coefficients defined as
$$ W(u) = \sum_{w \in W}  u^{\ell(w)}.$$
For a subset $D$ of $W$, we also define $D(u) = \sum_{w \in D} u^{\ell(w)}.$
Especially, we are interested in the case where $D=W_I$, the subgroup generated by some subset $I \subset S$.
For instance, when $W$ is finite,
$$  \sum_{I \subset S} (-1)^{|I|}\frac{W(u)}{W_I(u)} = u^{\ell(w_0)}. $$
Here $w_0$ is the unique element of maximal length.
When $W$ is infinite,
$$  \sum_{I \subset S} (-1)^{|I|}\frac{W(u)}{W_I(u)} = 0,$$
which  implies that
\begin{equation} \label{formula1}
 W(u) = \left(\sum_{I \subsetneq S} \frac{(-1)^{|I|+|S|+1}}{W_I(u)} \right)\m.
 \end{equation}
See \cite{Hum}, Sections 1.11 and 5.12, for the proof of these statements.

\subsection{Hecke algebras}
For a Coxeter system $(W,S)$, a field $k$ and a formal parameter $q$  there is an associative $k$-algebra $H_q(W,S)$, called a Hecke algebra,  with generators $\{e_w\}_{w \in W}$. The multiplication of $H_q(W,S)$ is characterized by the following the relations:
\begin{eqnarray*}
 (e_s+1)(e_s-q)&=&0, \qquad \,\,\,\, \mbox{if }s \in S; \\
 e_w e_{v}&=& e_{wv}, \qquad \mbox{if }\ell(wv)= \ell(w)+\ell(v).
\end{eqnarray*}
Especially, if we set $q=1$, then $H_q(W,S)$ is isomorphic to the group algebra $k[W]$.

\subsection{Twisted Poincar\'{e} series}
For a representation $(\rho,V_\rho)$ of $H_{q}(W,S)$, consider the following power series
$$ D(\rho,u) = \sum_{w \in D} \rho(e_w) u^{\ell(w)} \in \mathrm{End}(V_\rho)[[u]],$$
called the Poincar\'{e} series of $D$ twisted by $\rho$. 
Note that when $D$ contains the identity element (e.g. $D=W_I$), the constant term of $D(\rho,u)$ is the identity operator and $D(\rho,u)$ has an inverse in $\mathrm{End}(V_\rho)[[u]]$.

Note that $e_s \mapsto q$ for all $s \in S$ induces a one-dimensional representation $\rho_1$ of $H_q(W,S)$. In this case,
$$ D(\rho_1,u) = \sum_{w \in D} \rho_1(e_w) u^{\ell(w)} =  \sum_{w \in D} q^{\ell(w)} u^{\ell(w)} = D(qu)$$
Therefore, one can regard the usual Poincar\'{e} series {as} a special case of twisted Poincar\'{e} series.

\subsection{Ihara zeta functions}
Let $W$ be the affine Weyl group of $G=$SL$_2$ over a local field $F$ with $p^{n}$ elements in its residue field. In this case, $S=\{s_1,s_2\}$ and $s_1 s_2$ has order infinity.
Let the formal parameter $q$ be equal to $p^n$, then the Hecke algebra $H_q(W,S)$ is isomorphic to the Iwahori-Hecke algebra of $G$.
Fix a discrete torsion-free cocompact subgroup $\Ga$ of $G$. Then the quotient of the Bruhat-Tits tree $\scrB$ of $G$ by $\Ga$ is a bipartite finite $(q+1)$-regular graph $X_\Ga$.  The Ihara zeta function of $\scrB_\Ga$ is defined as
$$ Z(\scrB_\Ga,u) = \prod_{c}(1-u^{\ell(c)})\m \in \Z[[u]].$$
Here $c$ runs over all equivalence classes of closed primitive geodesic walks in $\scrB_\Ga$ with length $\ell(c)$.

\bigskip

Let $\pi_\Ga$ be the representation of $H_q(W,S)$ induced from Iwahori-fixed vectors of $L^2(\Ga \backslash G)$.  In this case, elements in $H_q(W,S)$ can be regarded as operators on edges of $\scrB_\Ga$ and Hashimoto \cite{Ha1} showed that
\begin{theorem}
$Z(\scrB_\Ga,u) =\det(1- \pi_\Ga(e_{s_2 s_1}) u^2)^{\mathrm{-1}}.$
\end{theorem}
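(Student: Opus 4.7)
The plan is to identify $\pi_\Gamma(e_{s_2 s_1})$ explicitly as a non-backtracking edge operator on $\scrB_\Gamma$, and then conclude via the standard Euler-product expansion that interprets a reciprocal characteristic polynomial as an Ihara-type zeta function. As the Iwahori subgroup $I$ stabilizes an oriented edge $e_0$ of the tree $\scrB$, the coset space $G/I$ is in bijection with the $G$-orbit of $e_0$. Because $\SL_2(F)$ acts on $\scrB$ by type-preserving automorphisms, this orbit is one of the two halves $E_1$ of the bipartite decomposition of the oriented edges of $\scrB$. Passing to the $\Gamma$-quotient then identifies $L^2(\Gamma \backslash G)^I$ with the space of $\mathbb{C}$-valued functions on the finite set of oriented edges of $\scrB_\Gamma$ lying in the projected class $E_1$.

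The second, and main, technical step is to translate the Iwahori-Hecke generators into concrete edge operators. The double coset $I s_i I$ decomposes as a disjoint union of $q$ right $I$-cosets whose representatives correspond to the $q$ oriented edges of $\scrB$ sharing the vertex of $e_0$ fixed by $s_i$ and differing at the other endpoint. Because $s_2 s_1$ is reduced, the Hecke multiplication law gives $e_{s_2 s_1} = e_{s_2} e_{s_1}$, and tracking the two reflections geometrically shows that $\pi_\Gamma(e_{s_2 s_1})$ sends $\delta_e$ to the sum of $\delta_{e'}$ over the $q^2$ oriented edges $e'$ obtained from $e$ by a non-backtracking walk of length two. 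The geometric content is precisely that length additivity $\ell(s_2 s_1) = \ell(s_2) + \ell(s_1)$ rules out the backtracking extensions. Consequently $\pi_\Gamma(e_{s_2 s_1})$ is the restriction to $E_1$ of the square $T^2$ of the standard Hashimoto non-backtracking edge operator $T$ on all oriented edges of $\scrB_\Gamma$.

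With this identification in hand the theorem becomes a bookkeeping exercise. Applying $\det(1 - Mu)^{-1} = \exp \sum_{k \geq 1} \Tr(M^k) u^k / k$ to $M = \pi_\Gamma(e_{s_2 s_1})$ and using $\pi_\Gamma(e_{s_2 s_1})^k = \pi_\Gamma(e_{(s_2 s_1)^k})$ (which holds because $(s_2 s_1)^k$ is reduced) yields
\begin{equation*}
\det\bigl(1 - \pi_\Gamma(e_{s_2 s_1})\, u^2\bigr)^{-1} \;=\; \exp \sum_{k \geq 1} \frac{\Tr_{E_1}(T^{2k})}{k}\, u^{2k}.
\end{equation*}
Bipartiteness forces every closed non-backtracking tail-less walk to have even length and to split evenly according to whether the starting oriented edge lies in $E_1$ or $E_2$, so $\Tr_{E_1}(T^{2k}) = N_{2k}/2$, where $N_n$ is the total number of closed non-backtracking tail-less walks of length $n$ in $\scrB_\Gamma$. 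Reindexing and applying the standard decomposition $N_n = \sum_{d \mid n} d\, P_d$, with $P_d$ the number of equivalence classes of primitive closed geodesic walks of length $d$, recovers the Euler product $Z(\scrB_\Gamma, u) = \prod_{[c]} (1 - u^{\ell(c)})^{-1}$, completing the proof.
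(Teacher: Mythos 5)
Your argument is correct and is essentially the standard Hashimoto proof that the paper simply cites from [Ha1] rather than reproving: identify $L^2(\Gamma\backslash G)^I$ with functions on one orientation class of edges, recognize $\pi_\Gamma(e_{s_2s_1})$ as the two-step non-backtracking edge operator via length-additivity, and convert $\exp\sum_k \mathrm{Tr}(M^k)u^{2k}/k$ into the Euler product using $N_n=\sum_{d\mid n}dP_d$ and the bipartite halving $\mathrm{Tr}_{E_1}(T^{2k})=N_{2k}/2$. This is also exactly the rank-one specialization of the counting argument the paper itself carries out in Section 3 (where $\mathrm{Tr}(A_w^k)=\#\scrP_k$ plays the role of your trace identity), so no gaps to report.
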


On the other hand, one can factor the group $W$ as a product of three subsets as follows.
\begin{equation}\label{Fatorization1}
W= \langle s_1 \rangle  \cdot \{ (s_2 s_1)^m\}_{m=0}^\infty \cdot \langle s_2 \rangle.
\end{equation}
Note that such a product always gives reduced words in $W$. Therefore, for any  representation $\rho$ of $H_q(W,S)$, we have
\begin{align*}
 W_{\{s_1,s_2\}}(\rho,u)  &= W_{\{s_1\}}(\rho,u) \bigg(\sum_{i=1}^\infty \rho(e_{s_2}e_{s_1})^m u^{2m} \bigg) W_{\{s_2\}}(\rho,u) \\
 &= W_{\{s_1\}}(\rho,u)  \left(I- \rho(e_{s_2}e_{s_1})u^2 \right)\m W_{\{s_2\}}(\rho,u)
\end{align*}
For  a finite dimensional representation $\rho$ of $H(W,G)$ over a field $K$, one can consider the determinant of $W_I(\rho,u)$, which is an invertible
element in the power series $K[[u]]$ (since its constant term is equal to one).
For convenience, define
$$ \Alt(W)(u)=  \prod_{I \subset S} W_I(u)^{(-1)^{|I|+|S|}}$$
and
$$\det \Alt(W)(\rho, u)=  \prod_{I \subset S} \det  W_I(\rho,u)^{(-1)^{|I|+|S|}}.$$

It was pointed out in \cite{Hof} that we can rewrite Hashimoto's result as the following.
\begin{theorem}
As a power series,
$$ Z(\scrB_\Ga,u) = \det \Alt(W)(\pi_\Ga,u).$$
\end{theorem}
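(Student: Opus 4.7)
The plan is to compute $\det \Alt(W)(\pi_\Ga,u)$ directly from the definition and the factorization displayed just above the theorem, and then identify the result with Hashimoto's formula.

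First, I would enumerate the subsets of $S=\{s_1,s_2\}$. With $|S|=2$, the sign $(-1)^{|I|+|S|}$ equals $+1$ when $|I|$ is even and $-1$ when $|I|$ is odd, so
\[
\det \Alt(W)(\pi_\Ga,u) = \frac{\det W_\emptyset(\pi_\Ga,u)\cdot \det W_{S}(\pi_\Ga,u)}{\det W_{\{s_1\}}(\pi_\Ga,u)\cdot \det W_{\{s_2\}}(\pi_\Ga,u)}.
\]
Since $W_\emptyset=\{1\}$, we have $W_\emptyset(\pi_\Ga,u)=I$, whose determinant is $1$.

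Next, I would invoke the length-preserving factorization (\ref{Fatorization1}) that the paper already established. Because the product $\langle s_1\rangle \cdot \{(s_2 s_1)^m\} \cdot \langle s_2\rangle$ consists entirely of reduced words, applying $\pi_\Ga$ to each $e_w$ respects the Hecke product, and the calculation already carried out in the excerpt yields
\[
W_S(\pi_\Ga,u) = W_{\{s_1\}}(\pi_\Ga,u)\,\bigl(I-\pi_\Ga(e_{s_2}e_{s_1})u^2\bigr)^{-1}\,W_{\{s_2\}}(\pi_\Ga,u).
\]
Taking determinants of both sides (all three factors on the right are invertible in $K[[u]]$ since their constant terms are the identity, so determinants are well-defined units in $K[[u]]$) gives
\[
\det W_S(\pi_\Ga,u) = \det W_{\{s_1\}}(\pi_\Ga,u)\cdot \det W_{\{s_2\}}(\pi_\Ga,u)\cdot \det\bigl(I-\pi_\Ga(e_{s_2}e_{s_1})u^2\bigr)^{-1}.
\]

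Substituting this into the formula for $\det \Alt(W)(\pi_\Ga,u)$, the factors $\det W_{\{s_1\}}(\pi_\Ga,u)$ and $\det W_{\{s_2\}}(\pi_\Ga,u)$ cancel, leaving
\[
\det \Alt(W)(\pi_\Ga,u) = \det\bigl(I-\pi_\Ga(e_{s_2}e_{s_1})u^2\bigr)^{-1}.
\]
Finally, since $\ell(s_2 s_1)=2=\ell(s_2)+\ell(s_1)$, the defining relations of $H_q(W,S)$ give $e_{s_2 s_1}=e_{s_2}e_{s_1}$, so the right-hand side coincides with Hashimoto's expression $\det(1-\pi_\Ga(e_{s_2 s_1})u^2)^{-1} = Z(\scrB_\Ga,u)$.

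There is no real obstacle: the only potentially delicate point is verifying that $\det$ commutes with the displayed factorization as a power series identity in $\mathrm{End}(V_{\pi_\Ga})[[u]]$, which is routine because each factor has invertible (in fact, identity) constant term.
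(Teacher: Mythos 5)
Your proposal is correct and follows essentially the same route the paper takes: it combines the length-preserving factorization $W=\langle s_1\rangle\cdot\{(s_2s_1)^m\}\cdot\langle s_2\rangle$ and the resulting identity for $W_S(\rho,u)$ displayed just before the theorem with Hashimoto's formula, then takes determinants and cancels the rank-one factors. The sign bookkeeping and the identification $e_{s_2s_1}=e_{s_2}e_{s_1}$ are both handled correctly.
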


\
{This of course only applies when the ground field $K$ has characteristic zero.} This interpretation suggests a possible way to generalize Ihara's theorem to higher rank cases.
%In the rest of the paper, we first generalize Eq. (\ref{Fatorization1}) to the case of rank two and discuss its relation to geometric zeta functions on the building.

\section{Affine Coxeter groups of rank two}

\subsection{Alternating products of Poincar\'{e} series} \label{sec-alt}
Suppse $(W,S=\{s_1,s_2,s_3\})$ is a Coxeter sytem for an affine Coxeter group of rank two.
Let $m_{ij}$ be the order of $s_i s_j$.  There are three types of such Coxeter systems up to isomorphism, characterized as the following:
\begin{itemize}
\item Type $\tilde{A_2}$: $(m_{12},m_{23},m_{13})=(3,3,3)$.
\item Type $\tilde{C_2}$: $(m_{12},m_{23},m_{13})=(4,2,4)$.
\item Type $\tilde{G_2}$: $(m_{12},m_{23},m_{13})=(6,2,3)$.
\end{itemize}
To evaluate $\Alt(W)(u)$, note that $W_I$ is a dihedral group or a cyclic group of order two when $I$ is a proper subset of $S$.
In this case, $W_I(u)$ can be written down directly. On the other hand, one can apply Eq. (\ref{formula1}) to compute the Poincar\'{e} series of $W(u)$ and obtain the following results.
$$
\Alt(W)(u)\m =
\begin{cases}
 (1-u^3)^2 &\mbox{, if $(W,S)$ is of type }\tilde{A}_2; \\
(1-u^4)(1-u^3) &\mbox{, if $(W,S)$ is of type }\tilde{C}_2; \\
(1-u^5)(1-u^3) &\mbox{, if $(W,S)$ is of type }\tilde{G}_2.\\
\end{cases}
$$
We shall show that the above identities can be extended to the case of twisted Poincar\'{e} series of Hecke algebras.

\subsection{Coxeter complexes}
Fix a geometric realization of $(W,S)$ on a real vector space $V$ of dimension two with an inner product $\langle, \rangle$ such that the corresponding metric is $W$-invariant.

The hyperplane $H_s$ fixed by an affine reflection $s$ in $W$ is called a wall. The set of walls  gives $V$ a simplicial structure and the resulting simplicial complex is called the Coxeter complex $\scrA$ of $W$.
Connected components of $V$ with all walls removed are open 2-simplices, called alcoves or chambers.

The unique chamber whose boundary is contained in $H_{s_1} \cup H_{s_2} \cup H_{s_3}$ is the fundamental chamber $\scrC$.
Label the facet (1-simplex) of $\scrC$ by $i$ if it is contained in $H_{s_i}$. Then one can extend this labelling to a $W$-invariant labelling on all facets in $\scrA$ uniquely.

Besides, there is a bijection between $W$ and chambers in $\scrA$ given by $w \mapsto w \scrC$. Moreover, if $w=s_{i_1} \cdots s_{i_n}$, then the chamber $w \scrC$ can be obtained by the gallery
$$C_0= \scrC \to C_1= s_{i_1} \scrC \to C_2= s_{i_1} s_{i_2} \scrC \to \cdots \to C_n= s_{i_1} \cdots s_{i_n} \scrC$$
such that the facet in $\bar{C}_k \cap \bar{C}_{k+1}$ is labelled by $i_{k+1}$. In other words, starting from the fundamental chamber $\scrC$, one can cross the facets labelled by $i_1,\cdots, i_n$ sequentially to arrive the chamber $w\scrC$.

\subsection{Straight strip}
Let $v_i = H_{s_{i+1}} \cap H_{s_{i+2}}$ be the vertex of the fundamental chamber $\scrC$ where the  subscripts are read modulo 3.
For convenience, we may assume that $v_3$ is the origin of $V$. Then the stabilizer of $v_3$ in $W$ is the linear subgroup $W_0$ generated by $s_1$ and $s_2$, which is the Weyl group. (This follows by our special choice of the ordering of $m_{ij}$ in the beginning of the section.) For the affine transformation $w \in W$, we can uniquely write $w = w_0w_t$ where $w_0 \in W_0$ is the linear part of $w$ and $w_t$ is a translation.

Now for $i=1,2$, removing all walls parallel to the vector $v_i$ from $V$, the connected component containing $\scrC$ in the resulting set is called the fundamental straight strip $\scrT_i$ in the direction $v_i$, which is the gray
region in Figure 1-4. For $w=w_0w_t \in W$, $w \scrT_i$ is a straight strip in the direction of $w_0 v_i$ and all such strips are called of type $i$.

\subsection{Stabilizer of $\scrT_i$} \label{stabilizer}
Let $\Stab(\scrT_i)$ be the stabilizer of $\scrT_i$ consisting of elements mapping $\scrT_i$ to itself and preserving the direction $v_i$.
Let $\scrL$ be the middle line of $\scrT_i$, then for $w \in \Stab(\scrT_i)$, $w$ must preserve $\scrL$ and its action on $\scrL$ has to be a translation (in the direction of $v_i$ or $-v_i$).
If the action of $w$ on $\scrL$ is trivial, $w$ fixes $\scrL$ point-wisely and it is the identity element or an affine reflection whose reflection axis is $\scrL$.
On the other hand, $\scrL$ is not a wall (by the construction of $\scrT_i$) and $w$ must therefore be the identity element in this case.
Therefore, elements in $\Stab(\scrT_i)$ are uniquely determined by their actions on $\scrL$.

Let $w_i$ be the element in  Stab$(\scrT_i)$ such that its action on $\scrL$ is the minimal translation in the direction of $v_i$. Then we have the following theorem.
\begin{theorem}
The stabilizer $Stab(\scrT_i)$ is a cyclic group generated by $w_i$.
\end{theorem}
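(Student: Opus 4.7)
The plan is to leverage the injection into the translation group of $\scrL$ that was just established, and then use the proper discontinuity of the action of $W$ to conclude that the image is a discrete, hence cyclic, subgroup of $\R$.

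More precisely, the preceding paragraph shows that assigning to each $w \in \Stab(\scrT_i)$ the signed translation length $\tau(w) \in \R$ of its action on $\scrL$ defines an injective group homomorphism $\tau\colon \Stab(\scrT_i)\to (\R,+)$. The first step of the proof is therefore to observe that the content of the theorem is equivalent to showing that the image $\tau(\Stab(\scrT_i))$ is an infinite cyclic subgroup of $\R$ generated by its smallest positive element.

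The second step is to establish discreteness. Since $W$ acts on $V$ as a discrete group of affine isometries with compact fundamental domain $\scrC$, for any point $x\in \scrL$ the orbit $W\cdot x$ is a discrete subset of $V$. The orbit $\Stab(\scrT_i)\cdot x$ is contained in $W\cdot x$ and lies on $\scrL$, and under the identification of $\scrL$ with $\R$ it is precisely the coset $\tau(\Stab(\scrT_i))+x$. Hence $\tau(\Stab(\scrT_i))$ is a discrete subgroup of $\R$, so by the standard classification it is either trivial or infinite cyclic, generated in the latter case by its smallest positive element, which by definition is $\tau(w_i)$.

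The third step, and the only genuine obstacle, is to rule out triviality, i.e.\ to produce at least one nontrivial element of $\Stab(\scrT_i)$. For this I would use the fact that the translation subgroup of the affine Weyl group $W$ is a full-rank lattice $L$ in $V$. Since the line $\R v_i$ passes through the origin and $v_i$ lies in a rational direction relative to $L$ (all vertices of the fundamental alcove are rational with respect to the coroot/coweight data from which $L$ is built), some nonzero integer multiple of $v_i$ lies in $L$. Any such lattice translation preserves the strip $\scrT_i$ and its orientation, and acts on $\scrL$ by a nonzero translation in the direction $v_i$. This produces a nonzero element in $\tau(\Stab(\scrT_i))$, completing the argument. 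The existence statement implicit in the definition of $w_i$ is exactly this nontriviality, so once it is verified the theorem follows.
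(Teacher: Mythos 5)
Your proof is correct and follows essentially the same approach the paper sets up, while making explicit two points the paper leaves implicit. The paper's preceding paragraph establishes the injection $\tau\colon\Stab(\scrT_i)\to(\R,+)$ (your step 1) and then simply asserts the theorem, relying on the subsequent case-by-case exhibition of $w_i$ in each of the three affine rank-two types (with the accompanying figures) to supply the remaining content. You instead complete the argument conceptually: discreteness of the image via proper discontinuity of the $W$-action with compact fundamental domain $\bar{\scrC}$, and nontriviality via the fact that some nonzero integer multiple of $v_i$ lies in the coroot lattice of translations and such a translation manifestly preserves $\scrT_i$ (it fixes every wall parallel to $v_i$). This buys you a proof that is independent of the rank-two classification and would transfer verbatim to the ``straight tube'' setting of the conjecture in Section 5, whereas the paper's computational verification gives, as a byproduct, the explicit reduced words for $w_i$ that are needed in the later length-additivity discussion (property $(*)$ and the $\tilde{G}_2$ replacement $w_1'=s_1w_1s_1$). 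One small point worth flagging: your minimal generator need not be a pure translation — in general it is a glide reflection (e.g.\ $w_2=s_3s_1s_2$ in type $\tilde{A}_2$) — but this does not affect your argument, since the lattice translation only serves to establish nontriviality of $\tau(\Stab(\scrT_i))$ and the generator is then whatever element realizes the minimal positive value of $\tau$.
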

\begin{minipage}{0.4\textwidth}
\begin{tikzpicture}[scale=1.8]
\clip (-0.4,-0.4) rectangle ++ (4.1,4.1);
\foreach \y in {-2,-1,...,4}
{
	\draw plot (\x+\y,1.732*\x);
	\draw plot (-\x+\y,1.732*\x);
	\draw plot (-\x,0.866*\y);
}
\filldraw [gray,opacity=.2]
(-5,0)--(10,0)--(10,.866)--(-5,.866);
\filldraw [gray,opacity=.2]
(0,0)--(1,0)--(0,-1.732)--(-1,-1.732);
\filldraw [gray,opacity=.2]
(5,8.66)--(6,8.66)--(1.5,.866)--(.5,.866);
\filldraw [gray,opacity=.5]
(1,0)--(0,0)--(.5,.866);
\filldraw [gray,opacity=.5]
(2,0)--(1.5,.866)--(2.5,.866);
\filldraw [gray,opacity=.5]
(1.5,.866)--(1,1.732)--(2,1.732);
\node at (.5,.3)  {$\scrC$};
\node at (.5,0){$1$};
\node at (.2,.5){$2$};
\node at (.8,.5){$3$};
\node at (3,.9){$\scrT_2$};
\node at (1.6,3){$\scrT_1$};
\node at (2,.5){$w_2 \scrC$};
\node at (1.5,1.4){$w_1 \scrC$};
\end{tikzpicture}

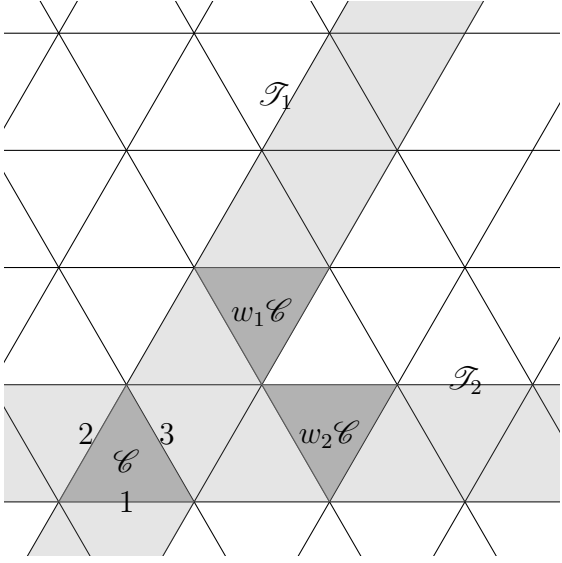
\captionof{figure}{Type $\tilde{A}_2$}
\end{minipage}
\hspace{1cm}
\begin{minipage}{0.4\textwidth}
\begin{tikzpicture}[scale=1.8]
\clip (-.5,-.5) rectangle ++ (4.2,4.2);
\filldraw [gray,opacity=.2]
(1,1)--(2,1)--(6,5)--(5,5);
\filldraw [gray,opacity=.2]
(0,0)--(1,0)--(-1,-2)--(-2,-2);
\filldraw [gray,opacity=.2]
(-2,0)--(5,0)--(5,1)--(-2,1);
\filldraw [gray,opacity=.5]
(1,0)--(0,0)--(.5,.5);
\filldraw [gray,opacity=.5]
(1.5,0.5)--(1,1)--(1.5,1.5)--(2,1);
\foreach \y in {-2,-1,...,5}
{
\draw plot (\x,\y);
\draw plot (\y,\x);
\draw plot (\x+\y,\x);
\draw plot (\x+\y,-\x);
}
\node at (.5,.2)  {{$\scrC$}};
\node at (.5,-.1) {$1$};
\node at (.2,.3) {$2$};
\node at (.8,.3) {$3$};
\node at (1.5, .8){$w_2 \scrC$};
\node at (1.5, 1.2){$w_1 \scrC$};
\node at (3.5, 1){$\scrT_2$};
\node at (3.2, 3.2){$\scrT_1$};
\end{tikzpicture}

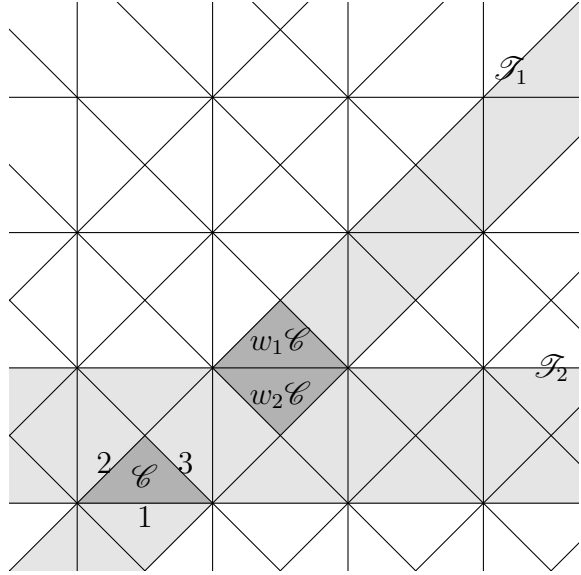
\captionof{figure}{Type $\tilde{C}_2$.}
\end{minipage}

\begin{minipage}{0.4\textwidth}
\begin{tikzpicture}[scale=2.4]
\clip (-.3,-2.1) rectangle ++ (3.2,3.2);
\filldraw [gray,opacity=.2]
(-1.5,-.866)--(7.5,4.33)--(7.5,2.578)--(-1.5,-2.578);
\filldraw [gray,opacity=.5]
(0,0)--(1,0)--(.75,.433);
\filldraw [gray,opacity=.7]
(0,0)--(1,0)--(.75,-.433);
\filldraw [gray,opacity=.5]
(1.5,-.866)--(2,0)--(2.25,-.433);
\filldraw [gray,opacity=.7]
(1.5,-.866)--(2,0)--(1.5,0);
\foreach \y in {-3,-2,...,4}
{
\draw plot (\x+\y,1.732*\x);
\draw plot (-\x+\y,1.732*\x);
\draw plot (-\x,0.866*\y);
\draw plot (1.5*\y, \x);
\draw plot(1.5*\x+3*\y, 0.866*\x);
\draw plot(-1.5*\x+3*\y, 0.866*\x);
}

\node  at (.7,.2)  {{$\scrC$}};
\node  at (.7,-.2) {$s_1 \scrC$};
\node  at (2,.-.4) {$w_1 \scrC$};
\node  at (1.6,.-.2) {$w_1 s_1 \scrC$};
\node  at (.5,0) {$1$};
\node  at (.9,.3) {$3$};
\node  at (.5,.3) {$2$};
\end{tikzpicture}
\captionof{figure}{$\scrT_1$ in type $\tilde{G}_2$.}
\end{minipage}
\hspace{2 cm}
\begin{minipage}{0.4\textwidth}
\begin{tikzpicture}[scale=2.4]
\clip (-.3,-1.1) rectangle ++ (3.2,3.2);
\filldraw [gray,opacity=.2]
(-1,0)--(6,0)--(6,.866)--(-1,.866);
\filldraw [gray,opacity=.5]
(0,0)--(1,0)--(.75,.433);
\filldraw [gray,opacity=.5]
(1.5,.866)--(2.5,.866)--(2.25,.433);
\foreach \y in {-3,-2,...,4}
{
\draw plot (\x+\y,1.732*\x);
\draw plot (-\x+\y,1.732*\x);
\draw plot (-\x,0.866*\y);
\draw plot (1.5*\y, \x);
\draw plot(1.5*\x+3*\y, 0.866*\x);
\draw plot(-1.5*\x+3*\y, 0.866*\x);
}

\node  at (.7,.2)  {{$\scrC$}};
\node  at (2.2,.7) {$w\scrC$};
\node  at (.5,0) {$1$};
\node  at (.9,.3) {$3$};
\node  at (.5,.3) {$2$};
\end{tikzpicture}
\captionof{figure}{$\scrT_2$ in type $\tilde{G}_2$.}
\end{minipage}

By direct computation, we obtain the following.
\begin{enumerate}
\item When $(W,S)$ is of type $\tilde{A}_2$, we have  $w_1=s_3 s_2 s_1$ and $w_2 =  s_3 s_1 s_2$.
\item When $(W,S)$ is of type $\tilde{C}_2$, we have  $w_1= s_3 s_1 s_2 s_1$ and $w_2= s_3 s_1 s_2$.
\item When $(W,S)$ is of type $\tilde{G}_2$, we have $w_1 = s_3 s_1 s_2 s_3 s_1  $ and
$w_2 = s_3 s_1 s_2 s_1 s_2$.
\end{enumerate}
One can check case by case to see that except for $w_1$ in the case of type $\tilde{G_2}$,
\begin{itemize}
\item[(*)] the length of $(w_i)^k$ is equal to $|k|$ times of the length of $w_i$.
\end{itemize}
Now, in the case of $\tilde{G_2}$ we shall replace $w_1$ by
$$ w_1' = s_1 w_1 s_1 = (s_1 s_3 s_1) s_2 s_3  = (s_3 s_1 s_3 )s_2  s_3 = s_3 s_1 (s_3 s_2  s_3) = s_3 s_1 s_2.$$
which is a generator of the stabilizer of $s_1 \scrT_1$. Then $w_1'$ will also satisfy the property $(*)$.
To abuse the notation, we will also denote $w_1'$ by $w_1$ (in the case of the group $\tilde{G_2}$) in the rest of paper.

Let $H_i = \{ (w_i)^k \}_{k \in \Z_\geq 0}.$ Note that
$$ H_i(u)= \sum_{i=0}^\infty u^{\ell(w_i^k)}  = \sum_{i=0}^\infty  u^{k \ell(w_i)} = (1- u^{\ell(w_i)})\m.$$
Therefore, we can rewrite the result in \S \ref{sec-alt} as
\begin{proposition} \label{prop1}
For any affine Coxeter group $(W,S)$ of rank two, the following hold.
$$ H_1(u)H_2(u)=\Alt(W)(u) \qquad \mbox{and} \qquad H_1(u)H_2(u)=\Alt(W)(u).$$
\end{proposition}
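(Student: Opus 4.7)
The plan is to verify the identity by direct calculation in each of the three types, since both sides already admit closed-form expressions that were essentially laid out in Sections 3.1 and 3.4. The right-hand side was computed in Section 3.1 by inserting the elementary Poincar\'e series of the finite dihedral (or order-two) subgroups $W_I$ with $I \subsetneq S$ into the inversion formula (\ref{formula1}) to produce $W(u)$, and then forming the alternating product. The upshot recorded there is that $\Alt(W)(u)^{-1}$ factors as $(1-u^3)^2$, $(1-u^4)(1-u^3)$ and $(1-u^5)(1-u^3)$ in types $\tilde A_2$, $\tilde C_2$ and $\tilde G_2$ respectively. I would take these factorizations as the starting point.

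For the left-hand side, I would invoke property $(*)$, which guarantees $\ell(w_i^k) = k\,\ell(w_i)$ for every $k \ge 0$, so summing the geometric series gives $H_i(u) = (1 - u^{\ell(w_i)})^{-1}$ as already noted just before the proposition. Reading off the word lengths from the explicit expressions in Section 3.4 (with $w_1$ replaced by $w_1' = s_3 s_1 s_2$ in type $\tilde G_2$ as stipulated), the pairs $(\ell(w_1), \ell(w_2))$ are $(3,3)$, $(4,3)$ and $(3,5)$, which match the factorizations above term by term. This identifies $H_1(u)H_2(u)$ with $\Alt(W)(u)$ in each case.

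The only non-routine ingredient is property $(*)$ itself. I would justify it geometrically using the set-up of Section 3.4: $w_i$ acts on the middle line $\scrL$ of the straight strip $\scrT_i$ as the minimal translation in the direction $v_i$, so the natural gallery from $\scrC$ to $w_i^k \scrC$ that stays inside $\scrT_i$ crosses exactly $k\,\ell(w_i)$ walls and realizes a reduced expression for $w_i^k$. Any strictly shorter expression would describe a gallery escaping the strip, contradicting the fact that $w_i^k$ preserves $\scrT_i$ and induces a translation of length $k\,\ell(w_i)$ on $\scrL$.

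The main obstacle is the $\tilde G_2$ case, where $\scrT_1$ is two chambers wide, so the generator $w_1 = s_3 s_1 s_2 s_3 s_1$ of $\Stab(\scrT_1)$ forces any minimal gallery to zigzag and accumulate extra length in higher powers, breaking $(*)$. The fix, already carried out in the text via the braid relation $s_1 s_3 s_1 = s_3 s_1 s_3$ together with $s_2 s_3 = s_3 s_2$, replaces $w_1$ by its $s_1$-conjugate $w_1' = s_3 s_1 s_2$, which generates the stabilizer of the adjacent strip $s_1\scrT_1$ whose boundary contains $\scrC$. Verifying that $(w_1')^k$ is reduced of length $3k$ is then the delicate step: I would do it by exhibiting, for every $k$, an explicit gallery of length $3k$ along the boundary of $s_1 \scrT_1$ and invoking the strip argument above to rule out any shorter gallery.
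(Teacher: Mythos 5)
Your proof follows essentially the same route as the paper: read off $\Alt(W)(u)^{-1}$ from Section 3.1, compute $H_i(u) = (1-u^{\ell(w_i)})^{-1}$ via property $(*)$, and match the exponents $\ell(w_i)$ type by type, with the $\tilde G_2$ replacement $w_1 \mapsto w_1'$ handled exactly as in the text. Your geometric sketch of $(*)$ (which the paper leaves as a case-by-case check) has the right idea but is phrased loosely — the clean version is to count the hyperplanes separating $\scrC$ from $w_i^k\scrC$ using the wall-crossing characterization of length rather than to argue about galleries ``escaping the strip'' — though this does not affect the correctness of the overall argument.
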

Next, we will show that the above theorem can be extended to twisted Poincar\'{e} series on Hecke algebras.
Note that the twisted Poincar\'{e} series does not commute so the ordering in the alternating product does matter.
We will prove the following theorem in the rest of the section.
\begin{theorem} \label{maintheorem1}
For any representation $\rho$ of $H_q(W,S)$,
there exists certain ordering of
$$\bigg\{ H_1(\rho,u), H_2(\rho,u), W_{I}(\rho,u)^{(-1)^{|I|}}, I \subsetneq S\bigg\}$$
such that their product under this ordering is equal to $W(\rho,u)$.
\end{theorem}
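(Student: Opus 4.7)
The plan is to prove Theorem~\ref{maintheorem1} by exhibiting, for each of the three affine Coxeter types $\tilde A_2$, $\tilde C_2$, $\tilde G_2$, an explicit length-preserving decomposition of $W$ as an ordered product of subsets. The basic mechanism is the Hecke algebra relation $e_w e_v = e_{wv}$ whenever $\ell(wv) = \ell(w)+\ell(v)$: if a map $D_1 \times \cdots \times D_m \to W$, $(v_1,\ldots,v_m)\mapsto v_1 \cdots v_m$, is a length-preserving bijection, then the twisted Poincar\'e series factors as $W(\rho,u) = D_1(\rho,u)\cdots D_m(\rho,u)$, which is the ordered product we seek. This is exactly the non-commutative generalization of the rank-one factorization \eqref{Fatorization1} used in the SL$_2$ argument.

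The inverse factors $W_{\{s_i\}}(\rho,u)^{-1}$ on the right-hand side of the theorem will arise from the elementary identity
$$ \mathrm{Min}(W_I/W_{I'})(\rho,u) = W_I(\rho,u)\cdot W_{I'}(\rho,u)^{-1}, $$
valid for any nested pair $I' \subset I$ of generating subsets, because the set $\mathrm{Min}(W_I/W_{I'})$ of minimum-length coset representatives yields the length-preserving factorization $W_I = \mathrm{Min}(W_I/W_{I'})\cdot W_{I'}$. This suggests pairing each two-element parabolic $W_{\{s_i,s_j\}}$ with one chosen singleton parabolic $W_{\{s_k\}}^{-1}$, giving a product of three minimal-coset sets together with $H_1$ and $H_2$, which matches the commutative identity $\Alt(W)(u)\m = W(u)\m \cdot H_1(u)H_2(u)$ established in Proposition~\ref{prop1}.

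My concrete plan is as follows. First, by direct inspection of the Coxeter complex (Figures~1--4), I identify a canonical reduced gallery from $\scrC$ to each $w\scrC$ that decomposes naturally into (i) a $w_1$-translation segment inside the strip $\scrT_1$, (ii) a $w_2$-translation segment inside $\scrT_2$, and (iii) a bounded local motion inside a finite parabolic subgroup. Using the established facts that each $H_i$ is a cyclic monoid generated by $w_i$ with length-multiplicative powers (property $(*)$), and that the two strips together exhaust the complex up to bounded parabolic motion, one obtains the desired bijection. Lifting it to the Hecke algebra gives the ordered product, and substituting the minimal-coset-representative identity produces the nine factors listed in the statement.

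The main obstacle is verifying the length-preserving bijection in each of the three cases, particularly for $\tilde G_2$. There, the asymmetry between $\scrT_1$ and $\scrT_2$ (reflected in the replacement of $w_1$ by $s_1 w_1 s_1 = s_3 s_1 s_2$) and the relatively large parabolic $W_{\{s_1,s_2\}}$ of order $12$ make the direct combinatorial check more intricate than in $\tilde A_2$ or $\tilde C_2$. A careful picture-by-picture analysis of the walls, chambers, and galleries in each type, together with the standard theory of minimal coset representatives in Coxeter groups, should nonetheless suffice to complete the argument and to pin down the explicit ordering in each case.
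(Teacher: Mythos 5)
Your overall mechanism is the same as the paper's: factor $W$ as an ordered product $D_1\times\cdots\times D_m$ whose pieces are $H_1,H_2$ and minimal coset representative sets, check the factorization is length-preserving, and then use $e_w e_v = e_{wv}$ to lift it to $W(\rho,u)=D_1(\rho,u)\cdots D_m(\rho,u)$. The identification of $W_{J/I}(\rho,u)=W_J(\rho,u)W_I(\rho,u)^{-1}$ as the source of the inverse parabolic factors is also exactly the paper's move. So the skeleton is right, but there are two places where the plan as written would stall.

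First, the shape of the decomposition you sketch is not the one that works. You describe a gallery splitting into ``a $w_1$-segment, then a $w_2$-segment, then bounded parabolic motion,'' i.e.\ something of the form $H_1\times H_2\times(\text{finite parabolic piece})$. The paper's actual decompositions interleave the parabolic pieces: for $\tilde A_2$ and $\tilde C_2$ it is
$W = W_{\{1,2\}/\{2\}}\times H_1\times W_{\{2,3\}/\{3\}}\times H_2\times W_{\{1\}\backslash\{1,3\}}$,
with one parabolic block before $H_1$, one \emph{between} $H_1$ and $H_2$, and one after $H_2$; only for $\tilde G_2$ do the two $H_i$ sit adjacent, and even there with parabolic blocks on both sides. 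Finding the correct placement and the correct choice of which minimal coset sets to use is precisely the combinatorial content of the theorem, and ``a careful picture-by-picture analysis should suffice'' leaves that entirely open. As stated, your candidate decomposition $H_1\cdot H_2\cdot(\text{bounded set})$ does not biject onto $W$, so this is a genuine gap rather than just unfinished bookkeeping.

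Second, you plan to verify the length-preserving property directly by exhibiting a canonical reduced gallery for every $w$, which is quite heavy. The paper instead proves Lemma~\ref{lemma1}: if $W=D_1\times\cdots\times D_m$ as a set-theoretic bijection \emph{and} $D_1(u)\cdots D_m(u)=W(u)$ as untwisted Poincar\'e series, then the decomposition is automatically length-preserving. The Poincar\'e series identity is a purely algebraic computation out of Proposition~\ref{prop1} and the coset-representative identity, so all that remains is a set-theoretic bijection, which the paper reads off from the staged figures $\mathfrak{C}_m,\mathfrak{C}_{m-1},\ldots,\mathfrak{C}_1$. Adopting that split would make your verification considerably lighter and would be the natural way to close the remaining gap once the correct interleaved decomposition is in hand.
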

Immediately, we have the following corollary.
\begin{corollary} \label{corollary1}
When $\rho$ is finite dimensional,
$$ \det H_1(\rho,u)  \det H_2(\rho,u) = \det \Alt(W)(\rho,u).$$
\end{corollary}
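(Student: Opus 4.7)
The plan is to derive Corollary \ref{corollary1} directly from Theorem \ref{maintheorem1} by passing to determinants. The key point is that although Theorem \ref{maintheorem1} supplies an order-sensitive factorization of $W(\rho,u)$ inside the noncommutative ring $\mathrm{End}(V_\rho)[[u]]$, the determinant is multiplicative and takes values in the commutative power series ring $K[[u]]$. Applying $\det$ therefore converts the ordered operator product into an ordinary, order-independent scalar product.

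First I would apply $\det$ to both sides of the identity from Theorem \ref{maintheorem1} to obtain, regardless of whichever ordering the theorem supplies,
$$\det W(\rho,u) \;=\; \det H_1(\rho,u)\,\det H_2(\rho,u)\,\prod_{I \subsetneq S}\det W_I(\rho,u)^{(-1)^{|I|}}.$$
All factors are units in $K[[u]]$: each of $W_I(\rho,u)$ and $H_i(\rho,u)$ contains the identity of $W$, so its constant term is the identity operator on $V_\rho$ and its determinant has constant term $1$, making it invertible in $K[[u]]$.

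Next I would compare this against the definition
$$\det \Alt(W)(\rho,u) \;=\; \prod_{I \subset S}\det W_I(\rho,u)^{(-1)^{|I|+|S|}}.$$
Since $|S|=3$ we have $(-1)^{|I|+|S|} = -(-1)^{|I|}$; splitting off the $I=S$ term (which contributes $\det W(\rho,u)$) and using $\det W_\emptyset(\rho,u) = 1$, this rewrites as $\det \Alt(W)(\rho,u) = \det W(\rho,u)\cdot\prod_{I\subsetneq S}\det W_I(\rho,u)^{-(-1)^{|I|}}$. Solving the preceding display for $\det H_1(\rho,u)\det H_2(\rho,u)$ then produces exactly this expression, which is the claimed identity. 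In this sense the corollary is a purely formal consequence of Theorem \ref{maintheorem1}, and the only work involved is the sign bookkeeping arising from $|S|=3$; all the substantive content sits in the theorem itself, so I do not expect any genuine obstacle at the level of the corollary.
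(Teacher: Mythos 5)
Your proposal is correct and matches the paper's reasoning: the paper simply records the corollary with the word ``Immediately,'' and the missing steps are exactly what you supply — apply $\det$ to the ordered operator identity of Theorem \ref{maintheorem1}, use multiplicativity of $\det$ and commutativity of $K[[u]]$, and do the $(-1)^{|I|+|S|}=-(-1)^{|I|}$ sign bookkeeping with $|S|=3$. The observation that every factor has constant term the identity (hence invertible determinant) is also the right justification for freely dividing.
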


\subsection{Factorization of Coxeter groups}
To prove Theorem \ref{maintheorem1}, we need the following lemma.
\begin{lemma} \label{lemma1}
Let $D_1,\cdots, D_m$ be non-empty subsets of a Coxeter group $W$. If $W=D_1\times \cdots \times D_m$ and $D_1(u)\cdots D_m(u)=W(u)$, then
this factorization is length-preserving such that
for $w_i \in D_i$, $\ell(w_1 \cdots w_m) = \ell(w_1)+ \cdots +\ell(w_m)$.
\end{lemma}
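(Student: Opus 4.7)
The plan is to exploit the interplay between subadditivity of the length function on a Coxeter group and the rigidity imposed by matching Poincar\'e series. The key ingredient is the standard inequality
$$ \ell(v_1 \cdots v_m) \leq \ell(v_1) + \cdots + \ell(v_m) \qquad (v_i \in W),$$
which is immediate from the definition of $\ell$ as the length of a shortest expression in $S$. I want to upgrade this inequality to an equality on every tuple coming from $D_1 \times \cdots \times D_m$.

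First I would reformulate the factorization hypothesis as the statement that the multiplication map
$$\phi\colon D_1 \times \cdots \times D_m \to W, \quad (w_1,\ldots,w_m) \mapsto w_1 \cdots w_m,$$
is a bijection. Setting $f(w_1,\ldots,w_m) := \ell(w_1) + \cdots + \ell(w_m)$, the Poincar\'e series identity $D_1(u) \cdots D_m(u) = W(u)$ becomes the assertion that $f$ on the source and $\ell$ on the target are equidistributed, that is $\sum_{a} u^{f(a)} = \sum_{w} u^{\ell(w)}$.

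The heart of the argument is to pass to sublevel sets. For each $n \geq 0$ let $A_n := f^{-1}(\{0,\ldots,n\})$ and $B_n := \{w \in W : \ell(w) \leq n\}$; since $S$ is finite, both are finite sets. Subadditivity gives $\phi(A_n) \subseteq B_n$, coefficient-wise comparison of Poincar\'e series gives $|A_n| = |B_n|$, and injectivity of $\phi$ then forces $\phi(A_n) = B_n$ for every $n$. From this, given any tuple $a$ with $n = f(a)$, one has $a \in A_n \setminus A_{n-1}$, hence $\phi(a) \in B_n \setminus B_{n-1}$, so $\ell(\phi(a)) = n = f(a)$, which is precisely the length-preserving conclusion.

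I do not expect a serious obstacle: the proof reduces to the combinatorial observation that a length-decreasing bijection between graded finite sets with matching rank-generating functions must be rank-preserving. The only subtle point is reading the hypothesis $W = D_1 \times \cdots \times D_m$ as bijectivity of $\phi$ rather than as some stronger internal-direct-product statement, but the accompanying Poincar\'e series identity makes this reading unambiguous.
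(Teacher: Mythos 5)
Your proof is correct and follows essentially the same route as the paper: the paper also compares the sublevel sets (its $\Omega_k$ and $\Omega_k'$ are your $A_n$ and $B_n$, identified via the multiplication bijection), uses subadditivity for the inclusion, and extracts $|\Omega_k|=|\Omega_k'|$ from the matching Poincar\'e series to force equality of the sublevel sets. Your write-up is slightly more explicit about the role of injectivity in the final step, but the argument is the same.
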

\begin{proof}
Since $\ell(w_1 \cdots w_m)  \leq \ell(w_1)+ \cdots +\ell(w_m)$, for any integer $k$ we have
\begin{eqnarray*}
\Omega_k &:=& \{ (w_1,\cdots,w_m)  \in D_1 \times \cdots \times D_m, \ell(w_1)+ \cdots +\ell(w_m)\leq k\} \\
& \subseteq &\{ (w_1,\cdots,w_m)  \in D_1 \times \cdots \times D_m, \ell(w_1\cdots w_m)\leq k\} \\
& = &\{ w \in W, \ell(w) \leq k\}=: \Omega_k'.
\end{eqnarray*}
On the other hand,
\begin{align*}
D_1(u)\cdots D_m(u) = \prod_{i=1}^m \left(\sum_{w_i \in D_i} u^{\ell(w_i)}\right) =|\Omega_{0}|+ \sum_{i=1}^\infty \left(|\Omega_{i}|-|\Omega_{i-1}|\right)u^i
\end{align*}
and
\begin{align*}
W(u) =  |\Omega_{0}'| + \sum_{i=1}^\infty \left(|\Omega_{i}'|-|\Omega_{i-1}'|\right)u^i
\end{align*}
From $D_1(u)\cdots D_m(u)=W(u)$, we conclude that $|\Omega_{i}|= |\Omega_{i}'|$ for all $i$. Therefore, $\Omega_k = \Omega_k'$ and $\ell(w_1 \cdots w_m) = \ell(w_1)+ \cdots +\ell(w_m)$ for $w_i \in D_i$.
\end{proof}

Note that when  $\ell(w_1 \cdots w_m) = \ell(w_1)+ \cdots +\ell(w_m)$, for any representation $\rho$ of $H_q(W,S)$, we have
$$ \rho(e_{w_1 \cdots w_m}) = \rho(e_{w_1}) \cdots \rho(e_{w_m}).$$
Together with the above lemma, we have
\begin{theorem} \label{theorem2}
If $W=D_1\times \cdots \times D_m$ and $D_1(u)\cdots D_m(u)=W(u)$, then for any representation $\rho$ of $H_q(W,S)$
$$ W(\rho,u) = D_1(\rho,u) \cdots D_m(\rho,u).$$
\end{theorem}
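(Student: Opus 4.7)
The plan is to combine Lemma \ref{lemma1} with the defining multiplicative relation of the Hecke algebra. Lemma \ref{lemma1} already supplies the key structural input: under the hypotheses $W = D_1 \times \cdots \times D_m$ and $D_1(u)\cdots D_m(u) = W(u)$, every expression $w_1 \cdots w_m$ with $w_i \in D_i$ is reduced in the sense that $\ell(w_1 \cdots w_m) = \ell(w_1) + \cdots + \ell(w_m)$. The remark immediately preceding the theorem then tells us that, for any representation $\rho$ of $H_q(W,S)$, reducedness of the product propagates to the Hecke generators as $\rho(e_{w_1 \cdots w_m}) = \rho(e_{w_1}) \cdots \rho(e_{w_m})$. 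So the task reduces to a bookkeeping computation with the formal power series $D_i(\rho,u)$.

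First I would expand the right-hand side directly:
\[
D_1(\rho,u) \cdots D_m(\rho,u) = \sum_{(w_1,\ldots,w_m) \in D_1 \times \cdots \times D_m} \rho(e_{w_1}) \cdots \rho(e_{w_m})\, u^{\ell(w_1) + \cdots + \ell(w_m)}.
\]
Next, I would apply Lemma \ref{lemma1} to rewrite $\ell(w_1) + \cdots + \ell(w_m)$ as $\ell(w_1 \cdots w_m)$, and simultaneously use the Hecke algebra identity to replace $\rho(e_{w_1}) \cdots \rho(e_{w_m})$ by $\rho(e_{w_1 \cdots w_m})$. The sum then becomes
\[
\sum_{(w_1,\ldots,w_m) \in D_1 \times \cdots \times D_m} \rho(e_{w_1 \cdots w_m})\, u^{\ell(w_1 \cdots w_m)}.
\]
Finally, the set-theoretic factorization $W = D_1 \times \cdots \times D_m$ says the map $(w_1,\ldots,w_m) \mapsto w_1 \cdots w_m$ is a bijection onto $W$, so this last sum is exactly $\sum_{w \in W} \rho(e_w)\, u^{\ell(w)} = W(\rho,u)$.

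There is no real obstacle here beyond invoking the two prepared inputs in the correct order; the only subtlety worth flagging is that both the length-preserving property from Lemma \ref{lemma1} and the uniqueness of the decomposition in $W = D_1 \times \cdots \times D_m$ are needed — the former to justify turning the product of Hecke generators into a single $\rho(e_{w_1 \cdots w_m})$ with matching exponent of $u$, and the latter to ensure that after re-indexing we recover the Poincar\'e series of $W$ with no over- or under-counting.
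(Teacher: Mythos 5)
Your proposal is correct and matches the paper's (largely implicit) argument: the paper likewise derives the identity by combining Lemma \ref{lemma1} with the observation that $\rho(e_{w_1\cdots w_m})=\rho(e_{w_1})\cdots\rho(e_{w_m})$ when lengths add, then re-indexing the expanded product via the bijection $D_1\times\cdots\times D_m\to W$. Your explicit bookkeeping of the power-series expansion simply fills in details the paper leaves to the reader.
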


Let us give some examples of the above theorem. For subsets $I \subset J \subset S$, define
$$ W_{J/I} = \{ w \in W_J,  \ell (ws)>  \ell(w) , \forall s \in I\} \qquad \mbox{and} \qquad W_{I\backslash J} = \{ w \in W,  \ell (sw)>  \ell(w) , \forall s \in I\}
$$
which are the sets of minimal left $W_I$-coset representatives and minimal right $W_I$-coset representatives of $W_J$ respectively.
\begin{theorem}[\cite{Hum}, \S 1.11]
For subsets $I \subset J \subset S$, the following hold.
\begin{enumerate}
\item $W_J = W_{J/I} \times W_I = W_I \times  W_{I\backslash J}.$
\item $W_J(u) = W_{J/I}(u) W_I(u) = W_I(u) W_{I\backslash J}(u).$
\end{enumerate}
\end{theorem}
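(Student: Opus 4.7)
The plan is to deduce both statements from a single length-additive factorization: every $w \in W_J$ admits a unique decomposition $w = xv$ with $x \in W_{J/I}$, $v \in W_I$, and $\ell(w) = \ell(x) + \ell(v)$. Part (1) is the set-theoretic content of this factorization, and part (2) follows by summing $u^{\ell(\,\cdot\,)}$ on both sides. The right-coset version $W_J = W_I \times W_{I\backslash J}$ is entirely parallel and is obtained by applying the length-preserving anti-involution $w \mapsto w^{-1}$, which interchanges the conditions $\ell(ws) > \ell(w)$ and $\ell(sw) > \ell(w)$.

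The technical core is the following key lemma: if $w \in W$ satisfies $\ell(ws) > \ell(w)$ for every $s \in I$, then $\ell(wv) = \ell(w) + \ell(v)$ for every $v \in W_I$. I would prove it by induction on $\ell(v)$ using the strong exchange condition: write $v = v's$ with $s \in I$ and $\ell(v') = \ell(v) - 1$, so the inductive hypothesis gives $\ell(wv') = \ell(w) + \ell(v')$, and $wv'$ therefore has a reduced expression of the form $(s_1 \cdots s_k)(t_1 \cdots t_{m-1})$. If the conclusion fails, $s$ is a right descent of $wv'$, and strong exchange allows a single-letter deletion from this expression to recover $wv = wv's$. A deletion among the $t_j$ would shorten $v$ by two and contradict $\ell(v) = m$; a deletion among the $s_i$ produces an element $w' \in wW_I$ of length $< \ell(w)$, which one converts into a contradiction with $w \in W^I := \{y : \ell(ys) > \ell(y),\ \forall s \in I\}$ via the standard fact that the inversion set $N(w)$ of an element of $W^I$ contains no reflection lying in $W_I$ (itself a consequence of the same exchange-condition analysis applied to the simple generators).

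With the key lemma in hand, the factorization follows by routine descent. Each left coset $wW_I$ with $w \in W_J$ contains a representative $x \in W^I$: iteratively replace $w$ by $ws$ whenever some $s \in I$ satisfies $\ell(ws) < \ell(w)$; since $I \subset J$ this descent stays inside $W_J$ and terminates at $x \in W_J \cap W^I = W_{J/I}$. Uniqueness is immediate from the lemma: if $x, x' \in W_{J/I}$ lie in the same left $W_I$-coset, so $x' = xv$ with $v \in W_I$, then $\ell(x') = \ell(x) + \ell(v)$ and $\ell(x) = \ell(x') + \ell(v^{-1})$ together force $v = 1$. Thus the multiplication map $W_{J/I} \times W_I \to W_J$, $(x,v) \mapsto xv$, is a length-preserving bijection, which proves (1); passage to Poincar\'e series yields (2).

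The main obstacle is the key lemma, specifically the $s$-deletion subcase in the inductive step, where one has to rule out the existence of a reflection in $W_I$ belonging to $N(w)$. Once the lemma is established, the remaining descent and uniqueness arguments are formal bookkeeping, and the right-coset factorization $W_J = W_I \times W_{I\backslash J}$ is obtained immediately from the left-coset statement by the inverse anti-automorphism.
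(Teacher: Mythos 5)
This theorem is quoted by the paper from Humphreys (it is Proposition 1.10 together with \S 1.11 there) and is not proved in the paper, so there is nothing to compare against except the cited source; your argument is precisely the standard one from that source and is correct in structure. The one step you assert rather than prove is the crucial one you yourself flag: that an element $w$ with $\ell(ws)>\ell(w)$ for all $s\in I$ has no reflection of $W_I$ among its right inversions. Calling this ``a consequence of the same exchange-condition analysis applied to the simple generators'' is too quick --- the clean proof goes through the geometric representation: $\ell(wt)<\ell(w)$ for a reflection $t$ iff $w(\alpha_t)<0$, and for $t\in W_I$ the root $\alpha_t$ is a nonnegative combination of $\{\alpha_s\}_{s\in I}$, each of which $w$ sends to a positive root. (Alternatively one can avoid the reflection fact altogether by first proving length-additivity for the \emph{minimal-length} coset representative via the deletion condition, and only afterwards identifying it with the descent-free element.) With that step supplied, the induction, the descent argument for existence, the uniqueness via $\ell(v)+\ell(v^{-1})=0$, and the passage to the right-coset version by $w\mapsto w^{-1}$ are all correct, and part (2) follows from (1) by summing $u^{\ell(\cdot)}$ over the length-additive factorization as you say.
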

For convenience, we also denote the set of generators $I=\{s_{i_1},\cdots,s_{i_k}\}$ by its set of indices $\{i_1,\cdots,i_k\}$.
Applying Theorem \ref{theorem2}, to prove Theorem \ref{maintheorem1}, it is sufficient to prove the following theorem.
\begin{theorem} \label{theorem3}
Let $D_1,\cdots, D_m$ be subsets of $W$ given by the following.
\begin{itemize}
\item $(D_1,\cdots,D_5) = (W_{\{1,2\}/\{2\}},H_1,W_{\{2,3\}/\{3\}}, H_2, W_{\{1\}\backslash\{1,3\}})$ if $(W,S)$ is of type $\tilde{A}_2$ or $\tilde{C}_2$.
\item $(D_1,\cdots,D_4) = (W_{\{2,1\}/\{1\}},H_1, H_2, W_{\{1,3\}})$ if $(W,S)$ is of type $\tilde{G}_2$.
\end{itemize}
Then $D_1(u)\cdots D_m(u)=W(u)$ and $W=D_1 \times \cdots \times D_m$.
\end{theorem}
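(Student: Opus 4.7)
The plan is to establish the theorem by combining a direct computation of Poincar\'{e} series with a geometric surjectivity argument, and then using a mass-balance trick to deduce both the set-theoretic factorization $W = D_1 \times \cdots \times D_m$ and length-additivity simultaneously.

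First, I would verify the Poincar\'{e} series identity $D_1(u) \cdots D_m(u) = W(u)$ in each of the three types by direct computation. Each $W_I$ with $I \subsetneq S$ is $\Z/2\Z$ or a finite dihedral group, so $W_I(u)$ is immediate from $m_{ij}$, and $W_{J/I}(u) = W_J(u)/W_I(u)$ by the theorem of Humphreys quoted above. Property $(*)$ gives $H_i(u) = 1/(1 - u^{\ell(w_i)})$, with $\ell(w_i)$ equal to $(3,3)$, $(4,3)$, $(3,5)$ in types $\tilde{A}_2, \tilde{C}_2, \tilde{G}_2$ respectively. The series $W(u)$ itself is computed from Eq.(\ref{formula1}) together with Proposition \ref{prop1}. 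The identity then reduces to a polynomial check; for instance in type $\tilde{A}_2$ one verifies both sides equal $(1-u^3)/(1-u)^3$.

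Second, I would show that the multiplication map $\phi : D_1 \times \cdots \times D_m \to W$ is surjective. The natural argument is geometric: given $w \in W$, consider the corresponding chamber $w\scrC$ in the Coxeter complex $\scrA$, and trace a gallery from $\scrC$ to $w\scrC$ that leaves $\scrC$ within the star of $v_3$ (giving a factor in $W_{\{1,2\}/\{2\}}$), translates along $\scrT_1$ via a power of $w_1$, transitions into $\scrT_2$ at a vertex of type $v_1$ (a factor in $W_{\{2,3\}/\{3\}}$), translates along $\scrT_2$ via a power of $w_2$, and finally makes a short adjustment at a vertex of type $v_2$ (a factor in $W_{\{1\} \backslash \{1,3\}}$). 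The $\tilde{G}_2$ case is analogous but has only four factors, because in the geometry of Figures 3--4 the transition between $\scrT_1$ and $\scrT_2$ is absorbed by the use of the modified element $w_1' = s_3 s_1 s_2$.

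Finally, given surjectivity and the Poincar\'{e} series identity, the theorem follows by summing over fibers of $\phi$. For each $w \in W$, surjectivity ensures $\phi^{-1}(w) \neq \emptyset$, and each $(w_1, \ldots, w_m) \in \phi^{-1}(w)$ contributes a term $u^{\ell(w_1) + \cdots + \ell(w_m)}$ of degree at least $\ell(w)$ to the product $D_1(u) \cdots D_m(u)$. The series identity then forces each fiber to consist of a single tuple which, moreover, satisfies $\ell(w_1) + \cdots + \ell(w_m) = \ell(w)$. The main obstacle will be the surjectivity step, which requires a careful case-by-case geometric analysis. The replacement of $w_1$ by $w_1'$ in type $\tilde{G}_2$ is essential here: without property $(*)$, the Poincar\'{e} series identity in the first step would already fail for the factor $H_1$, so no length-preserving factorization using the original $w_1$ could exist.
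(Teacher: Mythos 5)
Your first step (verifying $D_1(u)\cdots D_m(u) = W(u)$ by direct computation) matches the paper and is correct; the arithmetic in type $\tilde{A}_2$ indeed gives $(1-u^3)/(1-u)^3$ on both sides, and the $\tilde{G}_2$ case requires exactly the observation you make about replacing $w_1$ by $w_1'$ to make property $(*)$ hold.

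The gap is in your third step. You claim that \emph{surjectivity} of the multiplication map $\phi \colon D_1 \times \cdots \times D_m \to W$ together with the Poincar\'e series identity forces each fiber to be a single length-preserving tuple, but this implication is false in general. Surjectivity plus $\ell(w_1\cdots w_m) \le \sum \ell(w_i)$ plus the series identity only tells you that $\#\{t : \lambda(t) \le k\} = \#\{w : \ell(w) \le k\}$ and that the former set is contained in $\{t : \ell(\phi(t)) \le k\}$; it does \emph{not} prevent a tuple of high total length from being a second preimage of an element of small length while some other element waits for a still longer preimage. Concretely, take $T = W = \Z_{\ge 0}$ with $\lambda = \ell = \mathrm{id}$ and $\phi(n) = \max(0,n-1)$: this is surjective, satisfies $\ell(\phi(t)) \le \lambda(t)$, and the two generating series both equal $1/(1-u)$, yet $\phi$ is two-to-one at $0$ and not length-preserving anywhere. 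So the mass-balance deduction does not go through from surjectivity alone.

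What the paper's Lemma~\ref{lemma1} actually requires is \emph{bijectivity} of $\phi$ (that is what ``$W = D_1 \times \cdots \times D_m$'' means), and the paper establishes this directly by the pictorial tiling argument showing $\mathfrak{C}_1 = D_1\cdots D_m(\scrC)$ sweeps out each chamber exactly once. To salvage your route, you would need to upgrade your gallery argument from bare surjectivity to ``graded surjectivity'': for each $w\in W$, produce a tuple $(d_1,\ldots,d_m)$ with $d_1\cdots d_m = w$ \emph{and} $\sum \ell(d_i) = \ell(w)$, i.e.\ the gallery you trace must be minimal. Once you have that, $\phi$ restricted to $\{\sum\ell(d_i)\le k\}$ surjects onto the finite set $\{\ell(w)\le k\}$ of the same cardinality, hence is a bijection for every $k$, and the full claim follows. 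Your gallery construction is plausibly minimal, since it proceeds monotonically outward through the strips, but that needs to be argued — an arbitrary gallery through the prescribed regions need not be reduced — and as written your proposal does not address it.
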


Note that for type $\tilde{A}_2$ or $\tilde{C}_2$, by Proposition \ref{prop1},
\begin{align*}
D_1(u)\cdots D_5(u)&= W_{\{1,2\}/\{2\}}(u) H_1 (u) W_{\{2,3\}/\{3\}} (u) H_2(u) W_{\{1\}\backslash\{1,3\}}(u)\\
&= \frac{ W_{\{1,2\}}(u)W_{\{2,3\}}(u)W_{\{1,3\}}(u)}{W_{\{1\}}(u)W_{\{2\}}(u)W_{\{3\}}(u)} \Alt(W)(u) = W(u).
\end{align*}
For type $\tilde{G}_2$, note that $W_{\{2,3\}}=\{e, s_2, s_3, s_2 s_3\} = W_{\{2\}} \times W_{\{3\}}.$ Therefore,
$$W_{\{2,3\}}(u)=W_{\{2\}}(u)W_{\{3\}}(u)$$
 and
\begin{align*}
D_1(u)\cdots D_4(u)&= W_{\{2,1\}/\{1\}}(u) H_1 (u) H_2(u) W_{\{1,3\}}(u)\\
&= \frac{ W_{\{2,1\}}(u)W_{\{1,3\}}(u)}{W_{\{1\}}(u)} \Alt(W)(u)\cdot \frac{W_{\{2,3\}}(u)}{W_{\{2\}}(u)W_{\{3\}}(u)} = W(u).
\end{align*}

It remains to show that  $W=D_1 \times \cdots \times D_m$. Our strategy is to study the geometric interpretation of this factorization.
As we mentioned before, for each $w=s_{i_1}\cdots s_{i_k} \in W$,
one can start from the fundamental chamber $C$ and cross the facets labelled by $i_1,\cdots, i_n$ sequentially to arrive the chamber $wC$.
Therefore, one can find the collections of chambers  $\mathfrak{C}_{m+1}=\{C\}$, $\mathfrak{C}_m= D_m( \mathfrak{C}_{m+1})$, $\cdots, \mathfrak{C}_1 =D_1(\mathfrak{C}_{2})$ step by step.
Instead of writing down the details, we just draw the figure for $\mathfrak{C}_i$ for each type in the following.
At each stage, the whole gray area is the region $\mathfrak{C}_i$.
connected components with red boundary are copies of $\mathfrak{C}_{i+1}$.
Moreover, the set of dark gray triangles is the image of $D_{i}(C)$.
From these figures, we see that $\mathfrak{C}_1=D_1 \cdots D_m(C)$ and $W(C)$ are bijective. We conclude that $W=D_1 \times \cdots \times D_m$.

\subsection*{Type $\tilde{A}_2$}
$ $\\
\begin{center}
\begin{minipage}{0.15\textwidth}
\begin{tikzpicture}[scale=.92]
\clip (-0.9,-0.9) rectangle ++ (3.1,3.1);
\foreach \y in {-2,-1,...,4}
{
	\draw plot (\x+\y,1.732*\x);
	\draw plot (-\x+\y,1.732*\x);
	\draw plot (-\x,0.866*\y);

}
\filldraw [gray,opacity=.5]
(0,0)--(2,0)--(1.5,.866)--(.5,.866);
\node at (.5,.2) {\tiny{$C$}};
\draw [line width=.5mm, red]  (0,0)--(2,0)--(1.5,.866)--(.5,.866)--(0,0);
\draw [line width=.5mm, red]  (1.5,.866)--(1,0)--(.5,.866);
\end{tikzpicture}
$$ \mathfrak{C}_5$$
\end{minipage}
\hspace{.3cm}
\begin{minipage}{0.15\textwidth}
\begin{tikzpicture}[scale=.8]
\clip (-0.3,-1.2) rectangle ++ (4.3,3.7);
\foreach \y in {-2,-1,...,4}
{
	\draw plot (\x+\y,1.732*\x);
	\draw plot (-\x+\y,1.732*\x);
	\draw plot (-\x,0.866*\y);
}

\filldraw [gray,opacity=.2]
(0,0)--(10,0)--(10.5,.866)--(.5,.866);
\filldraw [gray,opacity=.5]
(1,0)--(0,0)--(.5,.866);
\filldraw [gray,opacity=.5]
(2,0)--(1.5,.866)--(2.5,.866);
\filldraw [gray,opacity=.5]
(4,0)--(3,0)--(3.5,.866);
\node at (.5,.2) {\tiny{$C$}};
\draw [line width=.5mm, red]  (0,0)--(2,0)--(1.5,.866)--(.5,.866)--(0,0);
\draw [line width=.5mm, red]  (3,0)--(2,0)--(1.5,.866)--(3.5,.866)--(3,0);
\draw [line width=.5mm, red]  (3,0)--(5,0)--(4.5,.866)--(3.5,.866)--(3,0);
\end{tikzpicture}
$$ \mathfrak{C}_4$$
\end{minipage}
\hspace{.9cm}
\begin{minipage}{0.15\textwidth}
\begin{tikzpicture}[scale=.6]
\clip (-0.9,-0.9) rectangle ++ (5.1,5.1);
\foreach \y in {-2,-1,...,4}
{
	\draw plot (\x+\y,1.732*\x);
	\draw plot (-\x+\y,1.732*\x);
	\draw plot (-\x,0.866*\y);
}

\filldraw [gray,opacity=.2]
(0,0)--(10,0)--(10.5,.866)--(-.5,.866);
\filldraw [gray,opacity=.2]
(-.5,.866)--(-5,8.66)--(-3,8.66)--(1.5,.866);
\filldraw [gray,opacity=.5]
(1,0)--(0,0)--(.5,.866);
\filldraw [gray,opacity=.5]
(0,0)--(.5,.866)--(-.5,.866);
\filldraw [gray,opacity=.5]
(1,1.732)--(.5,.866)--(1.5,.866);
\node at (.5,.3) {\tiny{$C$}};
\draw [line width=.5mm, red]  (0,0)--(12,0)--(11.5,.866)--(.5,.866)--(0,0);
\draw [line width=.5mm, red]  (0,0)--(-5,8.66)--(-4,8.66)--(.5,.866)--(0,0);
\draw [line width=.5mm, red]  (1.5,.866)--(-3,8.66);
\end{tikzpicture}
$$ \mathfrak{C}_3$$
\end{minipage}
\hspace{.3cm}
\begin{minipage}{0.15\textwidth}
\begin{tikzpicture}[scale=.6]
\clip (-0.9,-0.9) rectangle ++ (5.1,5.1);
\filldraw [gray,opacity=.5]
(1, 0)--(0,0)--(0.5,.866);
\filldraw [gray,opacity=.5]
(2.5, 2.598)--(1.5,2.598)--(2,.866*4);
\filldraw [gray,opacity=.5]
(2.5, 4.33)--(3.5,4.33)--(3,.866*4);
\filldraw [gray,opacity=.5]
(1, 1.732)--(2,1.732)--(1.5,.866);

\foreach \y in {-2,-1,...,7}
{
	\draw plot (\x+\y,1.732*\x);
	\draw plot (-\x+\y,1.732*\x);
	\draw plot (-\x,0.866*\y);
}

\filldraw [gray,opacity=.2]
(0,0)--(10,0)--(10,8.66)--(-2.5,4.33);
\node at (.5,.3) {\tiny{$C$}};
\draw [line width=.5mm, red]  (-1,1.732)--(0,0)--(6,0);
\draw [line width=.5mm, red]  (-1,5.196)--(1.5,.866)--(6,.866);
\draw [line width=.5mm, red]  (-1,6.898)--(1.5,2.598)--(6,2.598);
\draw [line width=.5mm, red]  (2,.866*6)--(3,.866*4)--(6,.866*4);
\end{tikzpicture}
$$ \mathfrak{C}_2$$
\end{minipage}
\hspace{.3cm}
\begin{minipage}{0.15\textwidth}
\begin{tikzpicture}[scale=.6]
\clip (-2.5,-2.5) rectangle ++ (5.1,5.1);
\filldraw [gray,opacity=.2]
(-6,-6)--(-6,6)--(6,6)--(6,-6);

\foreach \y in {-2,-1,...,7}
{
	\draw plot (\x+\y,1.732*\x);
	\draw plot (-\x+\y,1.732*\x);
	\draw plot (-\x,0.866*\y);
}

\filldraw [gray,opacity=.5]
(1,0)--(0,0)--(.5,.866);
\filldraw [gray,opacity=.5]
(1,0)--(0,0)--(.5,-.866);
\filldraw [gray,opacity=.5]
(-1,0)--(0,0)--(-.5,.866);
\node at (.5,.3) {\tiny{$C$}};
\draw [line width=.5mm, red]  (-5,8.66)--(0,0)--(6,0);
\draw [line width=.5mm, red]  (0,0)--(-5,-8.66);
\end{tikzpicture}
$$ \mathfrak{C}_1$$
\end{minipage}
\end{center}

\subsection*{Type $\tilde{C}_2$}
$ $\\

\begin{center}
\begin{minipage}{0.15\textwidth}
\begin{tikzpicture}[scale=.75]
\clip (-1.1,-1.1) rectangle ++ (4.2,4.2);
\filldraw [gray,opacity=.5]
(2,0)--(0,0)--(1,1);
\foreach \y in {-2,-1,...,5}
{
\draw plot (\x,\y);
\draw plot (\y,\x);
\draw plot (\x+\y,\x);
\draw plot (\x+\y,-\x);
}
\foreach \y in {-2,0,...,4}
{
\draw plot   (\x+\y,\x);
\draw plot   (\x+\y,-\x);
}
\node at (.5,.2)  {\tiny{$C$}};
\draw [line width=.5mm,red] (0,0)--(1,1)--(2,0)--(0,0);
\draw [line width=.5mm,red] (.5,.5)--(1,0)--(1.5,.5);
\end{tikzpicture}
$$ \mathfrak{C}_5$$
\end{minipage}
\hspace{.5cm}
\begin{minipage}{0.15\textwidth}
\begin{tikzpicture}[scale=.7,domain=-10:10 ]
\clip (-1.1,-1.1) rectangle ++ (4.5,4.5);
\filldraw [gray,opacity=.2]
(0,0)--(1,1)--(10,1)--(10,0);
\filldraw [gray,opacity=.5]
(1,0)--(0,0)--(.5,.5);
\filldraw [gray,opacity=.5]
(3,0)--(2,0)--(2.5,.5);
\filldraw [gray,opacity=.5]
(5,0)--(4,0)--(4.5,.5);
\filldraw [gray,opacity=.5]
(1,1)--(2,1)--(1.5,.5);
\filldraw [gray,opacity=.5]
(3,1)--(4,1)--(3.5,.5);
\foreach \y in {-2,-1,...,7}
{
\draw plot (\x,\y);
\draw plot (\y,\x);
\draw plot (\x+\y,\x);
\draw plot (\x+\y,-\x);
}
\foreach \y in {-2,0,...,7}
{
\draw plot   (\x+\y,\x);
\draw plot   (\x+\y,-\x);
}
\node at (.5,.2)  {\tiny{$C$}};
\draw [line width=.5mm,red] (0,0)--(1,1)--(2,0)--(3,1)--(4,0)--(5,1);
\draw [line width=.5mm,red] (0,0)--(10,0);
\draw [line width=.5mm,red] (1,1)--(10,1);
\end{tikzpicture}
$$ \mathfrak{C}_4$$
\end{minipage}
\hspace{.5cm}
\begin{minipage}{0.15\textwidth}
\begin{tikzpicture}[scale=.7,domain=-10:10 ]
\clip (-1.1,-1.1) rectangle ++ (4.5,4.5);
\filldraw [gray,opacity=.2]
(0,0)--(1,1)--(10,1)--(10,0);
\filldraw [gray,opacity=.2]
(0,0)--(1,1)--(1,10)--(0,10);
\filldraw [gray,opacity=.5]
(1,0)--(0,0)--(.5,.5);
\filldraw [gray,opacity=.5]
(0,0)--(0,1)--(.5,.5);
\foreach \y in {-2,-1,...,7}
{
\draw plot (\x,\y);
\draw plot (\y,\x);
\draw plot (\x+\y,\x);
\draw plot (\x+\y,-\x);
}
\foreach \y in {-2,0,...,7}
{
\draw plot   (\x+\y,\x);
\draw plot   (\x+\y,-\x);
}
\node at (.5,.2)  {\tiny{$C$}};
\draw [line width=.5mm,red] (10,1)--(1,1)--(0,0)--(10,0);
\draw [line width=.5mm,red] (1,10)--(1,1)--(0,0)--(0,10);
\end{tikzpicture}
$$ \mathfrak{C}_3$$
\end{minipage}
\hspace{.5cm}
\begin{minipage}{0.15\textwidth}
\begin{tikzpicture}[scale=.7,domain=-10:10 ]
\clip (-1.1,-1.1) rectangle ++ (4.5,4.5);
\filldraw [gray,opacity=.2]
(0,0)--(0,10)--(10,10)--(10,0);

\foreach \y in {0,1,...,5}
{
	\filldraw [gray,opacity=.5]
	(1+\y,\y)--(\y,\y)--(.5+\y,.5+\y);
}

\foreach \y in {-2,-1,...,7}
{
\draw plot (\x,\y);
\draw plot (\y,\x);
\draw plot (\x+\y,\x);
\draw plot (\x+\y,-\x);
}
\foreach \y in {-2,0,...,7}
{
\draw plot   (\x+\y,\x);
\draw plot   (\x+\y,-\x);
}
\node at (.5,.2)  {\tiny{$C$}};
\foreach \y in {0,1,...,5}
{
  \draw [line width=.5mm,red] (\y,5+\y)--(\y,\y)--(5+\y,\y);	
}
\end{tikzpicture}
$$ \mathfrak{C}_2$$
\end{minipage}
\hspace{.5cm}
\begin{minipage}{0.15\textwidth}
\begin{tikzpicture}[scale=.7,domain=-10:10 ]
\clip (-2.1,-2.1) rectangle ++ (4.5,4.5);
\filldraw [gray,opacity=.2]
(-10,-10)--(-10,10)--(10,10)--(10,-10);

\filldraw [gray,opacity=.5]
(0,0)--(.5,.5)--(1,0)--(.5,-.5);

\filldraw [gray,opacity=.5]
(0,0)--(0,1)--(-.5,.5);
\filldraw [gray,opacity=.5]
(0,0)--(0,-1)--(-.5,-.5);
\foreach \y in {-3,-2,...,4}
{
\draw plot (\x,\y);
\draw plot (\y,\x);
\draw plot (\x+\y,\x);
\draw plot (\x+\y,-\x);
}
\foreach \y in {-4,-2,...,4}
{
\draw plot   (\x+\y,\x);
\draw plot   (\x+\y,-\x);
}
\node at (.5,.2)  {\tiny{$C$}};

\draw [line width=.5mm,red] (-10,0)--(10,0);
\draw [line width=.5mm,red] (0,-10)--(0,10);		
\end{tikzpicture}
$$ \mathfrak{C}_1$$
\end{minipage}
\end{center}

\subsection*{Type $\tilde{G}_2$}
$ $\\
\begin{center}
\begin{minipage}{0.2\textwidth}
\begin{tikzpicture}[scale=1.1]
\clip (-1.1,-1.1) rectangle ++ (3.2,3.2);
\filldraw [gray,opacity=.5]
(0,0)--(1.5,.866)--(1.5,-.866);
\foreach \y in {-3,-2,...,4}
{
\draw plot (\x+\y,1.732*\x);
\draw plot (-\x+\y,1.732*\x);
\draw plot (-\x,0.866*\y);
\draw plot (1.5*\y, \x);
\draw plot(1.5*\x+3*\y, 0.866*\x);
\draw plot(-1.5*\x+3*\y, 0.866*\x);
}

\draw [line width=.5mm, red]
(0,0)--(1.5,.866)--(1.5,-.866)--(0,0)--(1.5,0);
\draw [line width=.5mm, red]
(.75,.433)--(1.5,-.866);
\draw [line width=.5mm, red]
(.75,-.433)--(1.5,.866);
\node  at (.7,.2)  {\tiny{$C$}};
\end{tikzpicture}
$$ \mathfrak{C}_4$$
\end{minipage}
\hspace{.5cm}
\begin{minipage}{0.2\textwidth}
\begin{tikzpicture}[scale=.7]
\coordinate (A) at (1.5,.866);

\clip (-.5,-3.6) rectangle ++ (5.2,5.2);
\filldraw [gray, opacity=.2]
(0,0)--(1.5,.866)--(18,-8.66)--(15,-8.66);
\foreach \y in {0,1,...,4}
{
	\filldraw [gray,opacity=.5]
   (1.5*\y,-.866*\y)--(1+1.5*\y,-.866*\y)--(.75+1.5*\y,.433-.866*\y);
   \filldraw [gray,opacity=.5]
   (1.5*\y+1.5,-.866*\y+.866)--(1.5+1.5*\y,-.866*\y)--(2+1.5*\y,-.866*\y);
}

\foreach \y in {-3,-2,...,4}
{
\draw plot (\x+\y,1.732*\x);
\draw plot (-\x+\y,1.732*\x);
\draw plot (-\x,0.866*\y);
\draw plot (1.5*\y, \x);
\draw plot(1.5*\x+3*\y, 0.866*\x);
\draw plot(-1.5*\x+3*\y, 0.866*\x);
}

\draw [line width=.5mm, red]  (0,0)--(1.5,0.866)--(1.5,-.866)--(3,0)--(3,-1.732)--(4.5,-.866)--(4.5,-2.596);
\draw [line width=.5mm, red]
(9,-3.472)--(1.5,.866)--(0,0)--(7.5,-4.33);
\node  at (.7,.2)  {\tiny{$C$}};
\end{tikzpicture}
$$ \mathfrak{C}_3$$
\end{minipage}
\hspace{.5cm}
\begin{minipage}{0.2\textwidth}
\begin{tikzpicture}[scale=.5, domain=-8:10]
\clip (-.5,-2.6) rectangle ++ (7.2,7.2);
\filldraw [gray, opacity=.2]
(15,8.66)--(0,0)--(15,-8.66);
\foreach \y in {0,1,...,4}
{
	\filldraw [gray,opacity=.5]
   (3*\y,0)--(1+3*\y,0)--(.75+3*\y,.433);
   \filldraw [gray,opacity=.5]
    (1.5+3*\y,.866)--(2.5+3*\y,.866)--(2.25+3*\y,1.299);
 }

\foreach \y in {-5,-4,...,9}
{
\draw plot (\x+\y,1.732*\x);
\draw plot (-\x+\y,1.732*\x);
\draw plot (-\x,0.866*\y);
\draw plot (1.5*\y, \x);
\draw plot(1.5*\x+3*\y, 0.866*\x);
\draw plot(-1.5*\x+3*\y, 0.866*\x);
}

\foreach \y in {0,1,...,4}
{
    \draw [line width=.5mm, red]
	(3*\y+9,-3.472)--(3*\y+1.5,.866)--(3*\y,0)--(3*\y+7.5,-4.33);
	\draw [line width=.5mm, red]
	(1.5+3*\y,.866)--(3*\y+9,5.193);
 }

\draw [line width=.5mm, red]
(9,-3.472)--(1.5,.866)--(0,0)--(7.5,-4.33);
\node  at (.7,.2)  {\tiny{$C$}};
\end{tikzpicture}
$$ \mathfrak{C}_2$$
\end{minipage}
\hspace{.5cm}
\begin{minipage}{0.2\textwidth}
\begin{tikzpicture}[scale=.7, domain=-8:10]
\clip (-2.5,-2.5) rectangle ++ (5.2,5.2);
\filldraw [gray, opacity=.2]
(5,-5)--(5,5)--(-5,5)--(-5,-5);

\filldraw [gray,opacity=.5]
(0,0)--(1,0)--(.5,.866);

\filldraw [gray,opacity=.5]
(0,0)--(.5,-.866)--(.75,-.433);

\filldraw [gray,opacity=.5]
(0,0)--(0,.866)--(-.5,.866);

\filldraw [gray,opacity=.5]
(0,0)--(-1,0)--(-.75,.433);

\filldraw [gray,opacity=.5]
(0,0)--(0,-.866)--(-.5,-.866);

\foreach \y in {-5,-4,...,9}
{
\draw plot (\x+\y,1.732*\x);
\draw plot (-\x+\y,1.732*\x);
\draw plot (-\x,0.866*\y);
\draw plot (1.5*\y, \x);
\draw plot(1.5*\x+3*\y, 0.866*\x);
\draw plot(-1.5*\x+3*\y, 0.866*\x);
}

\draw [line width=.5mm, red]
(7.5,4.33)--(-7.5,-4.33);
\draw [line width=.5mm, red]
(-7.5,4.33)--(7.5,-4.33);
\draw [line width=.5mm, red]
(0,5)--(0,-5);
\node  at (.7,.2)  {\tiny{$C$}};
\end{tikzpicture}
$$ \mathfrak{C}_1$$
\end{minipage}
\end{center}

\section{Zeta functions of straight strips}

\subsection{Finite quotients of buildings}

Let $G$ be a simply-connected connected split simple algebraic group of rank two over a local field $F$ with $q$ elements in its residue field.
Fix a maximal split torus $T$ with the maximal compact subgroup $T_0\leq T$ and the normalizer $N$.
Let $W=N/T_0$ be the affine Weyl group of $G$ with a set of generators $S$ corresponding to a fixed choice of positive roots $\Phi^+$. Let $\I$ be the corresponding Iwahori subgroup.

The Coxeter system $(W,S)$ is of type $\tilde{A}_2$, $\tilde{C}_2$, or $\tilde{G}_2$, when
$G=$ SL$_3$, Spin$_4$, or G$_2$ respectively.

Let $\scrB$ be the Bruhat-Tits building of $G$ whose chambers are parametrized by the cosets $G/\I$.
We shall identify the Coxeter complex $\scrA$ of the Coxeter system $(W,S)$ with the standard apartment corresponding to the torus $T$ and represent the standard chamber $\scrC$ in $\scrA$ by the Iwahori subgroup $\I$.

%By Iwahori decomposition
%$$G = \bigsqcup_{w \in W} \I w \I,$$
%we can extend the word length function on $W$ to a length function of $G$ so that $\ell(g)=k$ if $g \in \I w \I$ and $\ell(w)=k$.

Fix a discrete torsion-free cocompact subgroup $\Gamma$ of $G$, the quotient $\scrB_\Gamma$ is a finite complex with the fundamental group $\Gamma$.

\subsection{Stabilizer of straight strips}
Let $\scrT:=\scrT_i$ be the standard straight strip in the direction of $v=v_i$ which contains the standard chamber $\scrC$ when $(G,i) \neq (G_2,1)$. When $i=1$ and $G=G_2$, we set $\scrT= s_1 \scrT_i$ and $v= s_1 v_1$. (See \S \ref{stabilizer} for the discussion of this special case.) Let $w:=w_i$. 

Note that for a simplicial automorphism $\sigma$ on $\scrT$, its action on the middle line $\scrL$ is either an affine reflection (which has a fixed point) or a translation by $k v$ for some real number $k$.
In the later case, when $k>0$, $\sigma\big|_\scrL$ is called a positive translation.

Now let $\Aut(\scrT)$ be the group of simplicial automorphisms of $\scrT$ and consider the following subgroup of $\Aut(\scrT)$.
\begin{align*}
\Aut_1(\scrT) &= \{ \sigma \in \Aut(\scr\scrT), \sigma|_\scrL \mbox{ is a translation}\} \\
\Aut_2(\scrT) &= \{ \sigma \in \Aut(\scrT), \sigma \mbox{ is type-preserving}\}.
\end{align*}
Then one can verify that $\Aut_1(\scrT)\cap \Aut_2(\scrT)$ is a cyclic group generated by $w=w_i$, on a case-by-case basis from Figures 1--4 in Section 2.

Since $G$ is simply-connected, the action of $G$ on the building is type-preserving. Moreover, the assumption that $\Gamma$ is discrete and torsion-free implies that no non-identity element can have a fixed point in the building. Thus, the setwise stabilizer of $\scrT$ in $\Gamma$ acts faithfully on $\scrT_i$ and
$$\langle \ga_0 \rangle = \Stab_\Ga(\scrT_i) \cong \Stab_\Ga(\scrT) \big|_{\scrT} \leq \langle w \rangle $$
for some unique $\ga_0$ with $\ga_0\big|_{\scrL}$ being a positive translation.
A similar result is true of any other straight strip in the building $\scrB$. Thus, immediately, we have the following lemma.
\begin{lemma} \label{lemma31}
For $\ga \in \Ga$, if $\ga(g\scrT)=g\scrT$, then $\ga g\scrC = g w^{k} \scrC$ for some $k \in \Z$. Especially, it implies that $g^{-1} \ga g\in I w^k I$.
\end{lemma}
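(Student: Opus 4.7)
The plan is to reduce the statement on the translated strip $g\scrT$ to one about the standard strip $\scrT$ by conjugating by $g^{-1}$. Setting $\gamma' := g^{-1}\gamma g \in G$, the hypothesis $\gamma(g\scrT)=g\scrT$ becomes $\gamma'(\scrT)=\scrT$, so $\gamma'$ stabilises $\scrT$ setwise and acts on it as a simplicial automorphism. Because $G$ is simply connected, the $G$-action on $\scrB$ preserves the type labelling, hence $\gamma'|_\scrT \in \Aut_2(\scrT)$. We may then hope to invoke the structure theorem $\Aut_1(\scrT) \cap \Aut_2(\scrT) = \langle w \rangle$ established in the preceding paragraphs.

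The crux of the argument is showing that $\gamma'|_\scrT$ also lies in $\Aut_1(\scrT)$, i.e.\ that its restriction to the middle line $\scrL$ is a translation rather than an affine reflection. Suppose toward a contradiction that it is a (non-trivial) reflection; then $(\gamma')^2|_\scrL$ is the identity, so $(\gamma')^2 = g^{-1}\gamma^2 g$ fixes some vertex $v \in \scrL$. Equivalently, $\gamma^2$ fixes the vertex $gv$ of $\scrB$. Since $\Ga$ is discrete and torsion-free, its vertex stabilisers (finite and torsion-free, as intersections of $\Ga$ with compact parahorics) are trivial, forcing $\gamma^2 = 1$ and then $\gamma = 1$ by torsion-freeness. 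But this would make $\gamma'|_\scrL$ the identity, contradicting the reflection assumption. Hence $\gamma'|_\scrT \in \langle w \rangle$, and we may write $\gamma'|_\scrT = w^k|_\scrT$ for some $k \in \Z$.

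Finally, applying both sides to the fundamental chamber $\scrC \subset \scrT$ yields $g^{-1}\gamma g \cdot \scrC = w^k \cdot \scrC$, which rearranges to $\gamma g\scrC = g w^k\scrC$; this is the first claim. Translating chambers into cosets of $\I$, the equality $g^{-1}\gamma g \cdot \I = w^k \I$ in $G/\I$ yields $g^{-1}\gamma g \in w^k \I \subseteq \I w^k \I$, which is the second claim.

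The only real obstacle is the reflection-versus-translation dichotomy treated in the second paragraph; everything else is a direct appeal to the $\langle w \rangle$-structure of $\Aut_1(\scrT)\cap\Aut_2(\scrT)$ together with the standard identification of chambers of $\scrB$ with cosets of $\I$ in $G$.
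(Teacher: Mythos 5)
Your proof follows the same route as the paper: conjugate to the standard strip, use simple-connectedness of $G$ for type-preservation, use discreteness plus torsion-freeness of $\Gamma$ to rule out a reflective action on the middle line, and invoke $\Aut_1(\scrT)\cap\Aut_2(\scrT)=\langle w\rangle$. The structure is sound, but two small points deserve attention.

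First, the detour through $(\gamma')^2$ is unnecessary and introduces a slight inaccuracy. If $\gamma'|_\scrL$ is an affine reflection, it already has a fixed point on $\scrL$, so $\gamma'$ itself fixes a point of the building; you need not pass to the square. More importantly, you assert that the fixed point is ``some vertex $v\in\scrL$.'' In general $\scrL$ passes through \emph{no} vertices of $\scrA$ at all (by construction it is not a wall; for instance in type $\tilde A_2$ the line $\scrL$ of $\scrT_2$ runs through midpoints of edges and interiors of chambers, never through lattice points). The correct step is: a fixed point $p\in\scrL$ lies in the interior of a unique simplex $\sigma$, a simplicial automorphism preserving $p$ preserves $\sigma$ setwise, and a type-preserving automorphism preserving $\sigma$ fixes each vertex of $\sigma$; hence $\gamma'$ (equivalently $\gamma$) fixes a vertex, though that vertex is a vertex of $\sigma$ rather than a point of $\scrL$. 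With that repaired, the argument that $\Gamma\cap gKg^{-1}$ is trivial (discrete, compact, torsion-free) goes through as you say.

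Second, to conclude $\gamma'|_\scrT=w^k|_\scrT$ from $\gamma'|_\scrL$ being a translation you must also observe that the reflection of $\scrT$ across $\scrL$ lies in $\Aut_1$ (it is the identity on $\scrL$) but is excluded from $\Aut_1\cap\Aut_2$ only because it is not type-preserving; this is implicit in the paper's case-by-case verification of $\Aut_1\cap\Aut_2=\langle w\rangle$ and your appeal to it is fine, but your argument quietly uses that this intersection acts freely on $\scrL$ so that ``translation by zero'' forces $\gamma'|_\scrT$ to be the identity rather than the across-$\scrL$ reflection. Once these points are tidied up, the derivation of $\gamma g\scrC=gw^k\scrC$ and of $g^{-1}\gamma g\in w^k\I\subseteq\I w^k\I$ is exactly as in the paper.
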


\subsection{Closed pointed straight strips}

Consider the following set
\begin{align*} \scrP &=\{(\ga, g\scrC,g\scrT), g \in G, \gamma \in \Gamma, \gamma(g\scrT) = g\scrT, \gamma\mid_{g \scrL} \mbox{is a positive translation}\}\\
&=\{(\ga, g\scrC,g\scrT), g \in G, \gamma \in \Gamma, \gamma(g\scrT) = g\scrT, g^{-1}\gamma g \mid_{\scrL} \mbox{is a positive translation}\}.
\end{align*}

endowed with an equivalence relation defined as follows: $(\ga_1, g_1\scrC, g_1\scrT ) \sim (\ga_2, g_2\scrC, g_2\scrT)$ if there exists some $\gamma \in \Gamma$, such that $$ \gamma g_1 \scrT = g_2 \scrT, \gamma g_1 \scrC = g_2 \scrC, \quad \gamma \gamma_1 g_1 \scrC =\gamma_2 g_2 \scrC.$$
Especially, we have
$$(\ga, g\scrC, g\scrT ) \sim ( \tilde{\ga}\ga \tilde{\ga}^{-1},  \tilde{\ga} g\scrC,  \tilde{\ga} g\scrT), \quad \forall  \tilde{\ga} \in \Ga.$$
We define the notion of a ``closed pointed straight strip of type $w$" in $\scrB_\Ga$ to be an equivalence class of $\scrP$.
Denote by $\Conv( \langle \ga \rangle g\scrC)$ the convex hull of  $\{\ga^n g \scrC\}_{n \in \Z}$ and we shall prove the following lemma.

\begin{lemma} \label{lemma32}
For $\ga\in \Ga$ and $g\in G$, suppose that $g^{-1} \ga g \in I w^k I$ for some {$k \in \Z$}. Then $\Conv( \langle \ga \rangle g\scrC)$ is a straight strip.
\end{lemma}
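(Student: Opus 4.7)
We may assume $k \neq 0$, for otherwise $g^{-1}\ga g \in I$ fixes $\scrC$, so $\ga$ fixes $g\scrC$ and then $\ga = e$ as $\Ga$ is torsion-free (making the orbit trivial). The strategy is to construct a $\ga$-invariant straight strip $\scrT^* \subseteq \scrB$ containing $g\scrC$, and then to verify $\Conv(\langle\ga\rangle g\scrC) = \scrT^*$. The first step is to invoke the Bruhat decomposition $G = \bigsqcup_{v \in W} IvI$ together with property $(*)$: because $\ell(w^k) = |k|\ell(w)$, the set $Iw^kI$ factors as $(IwI)^{|k|}$, and decomposing $g^{-1}\ga g \in Iw^kI$ along this factorization yields a minimal gallery of length $|k|\ell(w)$ from $g\scrC$ to $\ga g\scrC$ in $\scrB$. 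Since any minimal gallery lies inside some apartment, I fix an apartment $\scrA_0 \subseteq \scrB$ containing this gallery and let $\phi: \scrA \to \scrA_0$ be the type-preserving isomorphism sending $\scrC \mapsto g\scrC$ and $w^k\scrC \mapsto \ga g\scrC$. Since $\scrT$ is convex (the region between two parallel walls) and contains both $\scrC$ and $w^k\scrC$, every minimal gallery joining them stays inside $\scrT$; transporting via $\phi$, the image $\scrT^* := \phi(\scrT) \subseteq \scrA_0$ is a straight strip containing both $g\scrC$ and $\ga g\scrC$.

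\textbf{The main obstacle.} The crux will be establishing $\ga\scrT^* = \scrT^*$. The natural move is to apply the same construction with $\ga g$ in place of $g$ (valid because $(\ga g)^{-1}\ga(\ga g) = g^{-1}\ga g \in Iw^kI$), which produces a second straight strip $\scrT^{**}$ containing $\ga g\scrC$ and $\ga^2 g\scrC$. Then $\ga\scrT^*$ and $\scrT^{**}$ are two straight strips sharing the chamber $\ga g\scrC$ and the same direction $v$, so they coincide once the following uniqueness claim is established: a straight strip in $\scrB$ is determined by a single chamber it contains together with its direction $v$. I would attempt the uniqueness by observing that such a strip is bounded by a specific pair of parallel walls, namely the unique walls of $\scrB$ through the two $v$-parallel facets of the given chamber; two apartments containing this pair of walls along with the common chamber must agree on the full strip lying between the walls, via a standard retraction argument in the CAT(0) building. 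Iterating, $\ga^n\scrT^* = \scrT^*$ for every $n \in \Z$, and so the entire orbit $\langle\ga\rangle g\scrC$ lies inside $\scrT^*$.

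\textbf{Conclusion.} Once $\langle\ga\rangle g\scrC \subseteq \scrT^*$, convexity of the strip gives $\Conv(\langle\ga\rangle g\scrC) \subseteq \scrT^*$. For the reverse inclusion, $\ga$ acts on $\scrT^*$ via $\phi$ as the generator $w^k$ of the stabilizer of $\scrT$, which is a glide isometry translating the middle line $\phi(\scrL)$ by a nonzero positive multiple of $v$; hence the translates $\ga^n g\scrC$ extend unboundedly in both directions along $\scrT^*$. Since any chamber of a straight strip spans the full transverse width of the strip (evident from Figures 1--4, and from the definition of $\scrT_i$ as a connected component delimited by two adjacent parallel walls), the convex hull of these chambers sweeps out all of $\scrT^*$. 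Therefore $\Conv(\langle\ga\rangle g\scrC) = \scrT^*$, a straight strip, as required. The technical heart of the argument is the uniqueness claim in the second paragraph, which reduces to a careful building-theoretic analysis of parallel walls and retractions.
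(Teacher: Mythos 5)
There is a genuine gap at the step you yourself identify as the technical heart: the uniqueness claim is false in a thick building. A panel of $\scrB$ is contained in many walls (at each vertex along a wall one can continue it into $q$ different branches of the building), so there is no ``unique wall of $\scrB$ through a $v$-parallel facet of a given chamber,'' and a straight strip is not determined by one chamber together with a direction --- indeed it is not even determined by the two chambers $g\scrC$ and $\ga g\scrC$, since beyond $\Conv(g\scrC\cup\ga g\scrC)$ the strip can be prolonged in many inequivalent ways. Consequently $\ga\scrT^*$ and $\scrT^{**}$ are only guaranteed to agree on $\Conv(\ga g\scrC\cup\ga^2 g\scrC)$ and may diverge past it, so the $\ga$-invariance of your $\scrT^*$, and with it the inclusion $\langle\ga\rangle g\scrC\subseteq\scrT^*$, does not follow. (The tree analogue makes the failure vivid: an edge of a thick tree lies on infinitely many bi-infinite geodesics, so ``the apartment through this edge in this direction'' is not well defined.)

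The paper avoids committing to any infinite strip in advance. It first checks the model case $\Conv(\langle w^k\rangle\scrC)=\scrT$, then for each $n$ uses $g^{-1}\ga^{2n}g\in(IwI)^{2n\cdot|k|}=Iw^{2kn}I$ (length-additivity of $w$) to produce $h_n\in G$ carrying $\Conv(w^{-kn}\scrC\cup w^{kn}\scrC)$ onto the finite convex hull $\Conv(\ga^{-n}g\scrC\cup\ga^{n}g\scrC)$; these finite hulls are nested and each lies in an apartment, and a compactness argument then yields a single apartment $h_\infty\scrA$ containing their union, after which $\Conv(\langle\ga\rangle g\scrC)=h_\infty\scrT$. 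Your argument could be repaired along the same lines --- replace the single strip $\scrT^*$ by the exhaustion by finite convex hulls and add the limiting step --- but as written the invariance step fails.
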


\begin{proof} First, let us consider the case that $g =e$, the identity element, and $\ga=w^k$.
Since $\ell(w^k) = k \ell(w)$,
the convex hull of  $\{w^{kn} \scrC\}_{-m \leq n \leq m}$ is equal to the convex hull of $w^{km}\scrC \cup w^{-km}\scrC$. Moreover, it is clear that when $m$ goes to infinity, the convex hull $\Conv( \langle w^k \rangle \scrC)$ is the whole straight strip $\scrT$.

Next, let us consider the general case. Since $g^{-1} \ga^{2n} g \in (IwI)^{2n} = I w^{2n} I$,
there exists some simplicial automorphism $h_n \in G$ which maps the pair $(w^{-n}\scrC, w^{n} \scrC)$ to $(\ga^{-n} g \scrC, \ga^n g \scrC)$. Especially, $h_n$ maps the convex hull $\Conv(w^{-kn} \scrC \cup w^{kn}\scrC)$ to the convex hull $\Conv((\ga^{-n} g \scrC \cup \ga^n g \scrC)$.

Since
$$ \Conv(\ga^{-n} g \scrC \cup \ga^n g \scrC) \subseteq  \Conv(\ga^{-m} g \scrC \cup \ga^m g \scrC)$$
when $n \leq m$,  and $\Conv(\ga^{-n} g \scrC \cup \ga^n g \scrC)\subset h_n \scrA$, it follows by a compactness argument that there exists some apartment $h_\infty \scrA$,
such that $ \bigcup_{n=1}^\infty  \Conv(\ga^{-n} g \scrC \cup \ga^n g \scrC) \subset h_\infty \scrA$.
In this case, we may further assume $\ga^n g \scrC = h_\infty w^{kn} \scrC$ for all $n \in \Z$. Then the convex hull
 $\Conv( \langle \ga \rangle g\scrC) =  h_\infty\Conv( \langle w^k \rangle \scrC) =  h_\infty\scrT$, is a straight strip.
\end{proof}

{Combing Lemma \ref{lemma31} and Lemma \ref{lemma32}, when $\gamma(g\scrT) = g\scrT$, $\Conv( \langle \ga \rangle g\scrC)$ is a straight strip which is also a subset of $g \scrT$. Thus we have the following proposition.
\begin{proposition} \label{prop1}
For $(\ga, g\scrC,g\scrT)\in \scrP$, $(\ga, g\scrC,g\scrT)= (\ga, g\scrC, \Conv( \langle \ga \rangle g\scrC)).$
\end{proposition}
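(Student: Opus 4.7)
The plan is to show that the third coordinates agree, i.e.\ $g\scrT = \Conv(\langle\gamma\rangle g\scrC)$; the identity of the first two coordinates is tautological. First, since $\gamma(g\scrT)=g\scrT$ and $g\scrC\subseteq g\scrT$, an obvious induction gives $\gamma^n g\scrC\subseteq g\scrT$ for every $n\in\Z$. Because $g\scrT$ is a convex subcomplex of the apartment $g\scrA$ (it is carved out of $g\scrA$ by the two walls parallel to $gv$ that bound the strip), the convex hull of $\{\gamma^n g\scrC\}_{n\in\Z}$ in the building is already contained in $g\scrA$, and in fact in $g\scrT$. This gives the inclusion $\Conv(\langle\gamma\rangle g\scrC)\subseteq g\scrT$.

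For the reverse inclusion, I would invoke Lemma~\ref{lemma31} to write $g^{-1}\gamma g\in I w^{k}I$ for some integer $k$, and use the positive-translation hypothesis on $\gamma|_{g\scrL}$ to pin down $k>0$. Lemma~\ref{lemma32} then identifies $\Conv(\langle\gamma\rangle g\scrC)$ with a complete straight strip $h_\infty\scrT$ for some $h_\infty\in G$ satisfying $\gamma^n g\scrC = h_\infty w^{kn}\scrC$ for all $n\in\Z$. In particular, $h_\infty\scrT$ is a straight strip which contains the chamber $g\scrC$.

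To conclude, I would argue that a straight strip contained in another straight strip and sharing a chamber with it must be the whole thing. Concretely, the analysis of $\Stab(\scrT_i)$ in \S\ref{stabilizer} shows that a straight strip is determined, inside any apartment containing it, by its middle line $\scrL$ and its direction $v$; and its middle line is recovered as the unique affine line along which the infinite cyclic subgroup of type-preserving, direction-preserving automorphisms acts by translation. Since $\gamma$ acts on $g\scrL$ by a positive translation by hypothesis, $g\scrL$ is forced to be the middle line of both $g\scrT$ and of $h_\infty\scrT = \Conv(\langle\gamma\rangle g\scrC)$, and the direction is forced to be $gv$ in both cases. Because both strips live in the apartment $g\scrA$ (the chambers $\gamma^n g\scrC$ all lie there, and a complete straight strip lies in any apartment containing a cofinal collection of its chambers), they must coincide.

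The main obstacle is the last step: pinning down the middle line of $\Conv(\langle\gamma\rangle g\scrC)$ intrinsically from the cyclic group $\langle\gamma\rangle$ and then showing that this information forces equality of the two strips. Once this is done via the rigidity of straight strips inside an apartment, the proposition follows immediately.
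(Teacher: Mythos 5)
Your plan follows the paper's own (very terse) argument, which simply combines Lemmas~\ref{lemma31} and~\ref{lemma32} with the observation that $\Conv(\langle\ga\rangle g\scrC)\subseteq g\scrT$; you supply more detail for the final equality step. That step, however, is flawed as written: you assert that the middle line of $\Conv(\langle\ga\rangle g\scrC)$ is recovered as \emph{the unique} line on which $\langle\ga\rangle$ acts by translation, but when $\ga$ restricted to the apartment is a pure translation rather than a glide reflection (as happens for some $w_i$, e.g.\ $w_1=s_3s_1s_2s_1$ in type $\tilde C_2$, a product of four reflections), it translates \emph{every} line parallel to $gv$, so nothing singles out the middle line, and the argument does not pin down the strip. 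The clean way to finish is more direct: $\Conv(\langle\ga\rangle g\scrC)\subseteq g\scrT$ by convexity of $g\scrT$; it is a straight strip by Lemma~\ref{lemma32}; a straight strip is by construction a connected component of the complement of a family of parallel walls, so no straight strip can be a \emph{proper} subset of another straight strip of the same direction; and since the chambers $\ga^n g\scrC$ march off to infinity along $gv$ (because $\ga\mid_{g\scrL}$ is a positive translation), both strips have direction $gv$. Hence the two strips coincide, which is what the paper leaves implicit.
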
 
}
Now let $g_1\scrC, \cdots g_n \scrC$ be a complete list of liftings of chambers in $\scrB_\Gamma$.

\begin{theorem}
The set $\cup_{k=1}^\infty \scrP_k$ forms a set of representatives of the equivalence classes in $\scrP$, where
$$ \scrP_k=\{(\ga, g_i\scrC,  \Conv( \langle \ga \rangle g_i\scrC), \ga \in \Ga, i\in\{1,2, \ldots n\}, g_i^{-1} \ga g_i \in Iw^k I\}$$
\end{theorem}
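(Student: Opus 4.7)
The plan is to verify two things: that $\cup_{k=1}^\infty\scrP_k$ contains a representative of every equivalence class in $\scrP$, and that it contains at most one representative per class. Disjointness of the $\scrP_k$ for distinct $k$ is immediate from the Bruhat decomposition $G=\sqcup_{v\in W}IvI$, which gives $Iw^kI\cap Iw^{k'}I=\emptyset$ whenever $k\neq k'$ (since $w$ has infinite order); hence no element of the union appears in two different $\scrP_k$.

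For the first point, start with an arbitrary $(\ga,g\scrC,g\scrT)\in\scrP$. The chamber $g\scrC$ projects to one of the $n$ chambers of $\scrB_\Ga$, so there exists $\tilde{\ga}\in\Ga$ and an index $i$ with $\tilde{\ga}g\scrC=g_i\scrC$. Set $\ga':=\tilde{\ga}\ga\tilde{\ga}^{-1}$; the triple $(\ga',g_i\scrC,\tilde{\ga}g\scrT)$ again lies in $\scrP$ and is equivalent to the original by the defining relation (take $\gamma=\tilde{\ga}$). Proposition~\ref{prop1} identifies its third coordinate with $\Conv(\langle\ga'\rangle g_i\scrC)$, and Lemma~\ref{lemma31} yields $g_i^{-1}\ga'g_i\in Iw^kI$ for some $k\in\Z$. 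It remains to confirm $k\ge 1$: since $\ga|_{g\scrL}$ is a positive translation and $\tilde{\ga}$ is a type-preserving simplicial automorphism of the building, $\ga'|_{\tilde{\ga}g\scrL}$ is also a positive translation, which rules out $k\le 0$. Hence the modified triple lies in $\scrP_k$.

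For the second point, suppose $(\ga_1,g_i\scrC,\Conv(\langle\ga_1\rangle g_i\scrC))\sim(\ga_2,g_j\scrC,\Conv(\langle\ga_2\rangle g_j\scrC))$ via some $\ga\in\Ga$. From $\ga g_i\scrC=g_j\scrC$ the chambers $g_i\scrC$ and $g_j\scrC$ lie in the same $\Ga$-orbit; since $\{g_1\scrC,\dots,g_n\scrC\}$ is a complete list of distinct orbit representatives, $i=j$. Then $\ga$ fixes the chamber $g_i\scrC$ in the building, and because $\Ga$ is discrete, cocompact and torsion-free the stabilizer of any point is a finite subgroup of $\Ga$, hence trivial, forcing $\ga=e$. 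The remaining condition $\ga\ga_1 g_i\scrC=\ga_2 g_j\scrC$ now reads $\ga_1^{-1}\ga_2 g_i\scrC=g_i\scrC$, and the same freeness argument gives $\ga_1=\ga_2$.

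The main obstacle, and the only place where one may need to appeal to the case-by-case geometric pictures of Figures~1--4, is the positivity check $k\ge 1$ in the surjectivity step: one must track that conjugation by $\tilde{\ga}$ carries the ``positive direction'' of $g\scrL$ to the positive direction of $\tilde{\ga}g\scrL$, so that the positive-translation property of $\ga$ is inherited by $\ga'$ on the new middle line. The rest is bookkeeping combined with Lemmas~\ref{lemma31}--\ref{lemma32} and Proposition~\ref{prop1}.
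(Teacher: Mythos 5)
Your proof is correct and follows essentially the same route as the paper: reduce an arbitrary triple to one based at a representative chamber $g_i\scrC$ via the equivalence relation, use Proposition 3.3 and Lemma 3.1 to land in some $\scrP_k$, and derive uniqueness from $\Ga\cap g\I g^{-1}=\{e\}$. Your explicit check that $k\geq 1$ (via the positive-translation condition being preserved under conjugation) fills in a step the paper leaves implicit, but the argument is the same.
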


\begin{proof} For  an element $p=(\ga, g\scrC, g\scrT)$ of $\scrP$, first we show that $p$ is equivalent to some element in $\scrP_k$.
Write $g\scrC = \delta g_i \scrC$ for some representative $g_i\scrC$ and $\delta \in \Ga$.
{Then by Proposition \ref{prop1}, 
$$p=  (\ga, g\scrC, \Conv( \langle \ga \rangle g\scrC))= 
(\ga, \delta g_i \scrC , \Conv( \langle \ga \rangle \delta g_i \scrC))$$} 
which is equivalent to 
$$p'=( \delta^{-1}\ga \delta, g_i \scrC , \delta^{-1} \Conv( \langle \ga \rangle \delta g_i \scrC)) = 
( \ga', g_i \scrC , \Conv( \langle \ga'
 \rangle  g_i \scrC))
$$
where $\ga'= \delta^{-1} \ga \delta \in \Ga$. Next we show that $p'$ is indeed an element of some $\scrP_k$. Since $g\scrC = \delta g_i \scrC$ and 
$ \ga(g\scrT)=g \scrT$,
we have $g a = \delta g_i $ for some $a \in I$ and 
$ g^{-1} \ga g \in I w^k I$ for some positive integer $k$.
Therefore, 
$$ g_i^{-1} \ga' g_i = g_i^{-1} \delta^{-1} \ga \delta g_i = a^{-1} g^{-1} \ga g a \in I w^k I.$$
Thus, $p' \in \scrP_k$.

To complete the proof, it remains to show that all elements in $\cup_{k=1}^\infty \scrP_k$ are not equivalent. Suppose that there are two equivalent elements $(\ga_1, g_{i_1}\scrC,  \Conv( \langle \ga_1 \rangle g_{i_1}\scrC)$,
$(\ga_2, g_{i_2}\scrC,  \Conv( \langle \ga_2 \rangle g_{i_2}\scrC))$. Then
$$ \ga g_{i_1} \scrT = g_{i_2} \scrT, \gamma g_{i_1} \scrC = g_{i_2} \scrC, \quad \gamma \gamma_1 g_{i_1} \scrC =\gamma_2 g_{i_2} \scrC$$
for some $\ga \in \Ga$. Since $\{g_i\scrC\}$ are representatives of $\Gamma$-orbits, we have $g=g_{i_1}=g_{i_2}$. Therefore $\ga g \scrC = g \scrC$ which implies $\ga \in \Ga \cap g I g^{-1} = \{ e\}$ (since the intersection is a discrete compact torsion-free subgroup). Thus, $\ga$ must be the identity element. Applying the third part of the above condition,
$$\gamma \gamma_1 g_{i_1} \scrC =\gamma_2 g_{i_2} \scrC \, \Rightarrow \, \gamma_1 g \scrC =\gamma_2 g \scrC
\, \Rightarrow \, \ga_1^{-1} \ga_2 \in \Ga \cap g \I g^{-1} \, \Rightarrow \,\ga_1 = \ga_2.
$$
We conclude that any two equivalent elements are always the same element.

\end{proof}

Now for a closed pointed straight strip $p$ represented by some element  in $\scrP_k$, we define its length $\ell(p)$ to be $k$.

\subsection{Counting closed pointed straight strips}
{Recall that one can identify the (affine) Hecke algebra $H_q(W,S)$ and the Iwahori-{Hecke} algebra $H(G,I)$ by mapping $e_w$ to $IwI$. Let $\pi_\Ga$ be the natural representation of $H_q(W,S)$ acting on $L^2(\Ga \backslash G/\I)$} and 
let $A_w$ be the matrix of $e_w= \I w \I$ under the basis of characteristic functions on $\Gamma g_1 I, \cdots,\Gamma g_n I$, denoted by $\{f_1,\cdots,f_n\}$. Write
$$ Iw^k I =  \bigsqcup_{\alpha \in \Omega_k} \alpha I,$$
Then %for $f(x) \in L^2(\Ga \backslash G / I)$,
%$$ (A_w)^k f(x)= \sum_{\alpha \in \Omega_k} f(x\alpha).$$
%Especially,
$$ (A_w^k)_{i,j} =\# \{ \alpha \in \Omega_k, \Gamma g_i I = \Gamma g_j \alpha I\}.$$
Note that when $\Gamma g_i I = \Gamma g_j \alpha I$, there exists some $\ga \in \Ga$, such that
$ \ga g_i I = g_j \alpha I$ and such $\ga$ is unique since if there exists $\ga'$ satisfying the same condition, then again we have $\ga' \ga^{-1} \in \Gamma \cap g_i I g_i^{-1}=\{e\}$.
Therefore, we can rewrite the above as
$$ (A_w^k)_{i,j} = \# \{ (\alpha, \ga) \in \Omega_k \times \Ga,  \ga g_i I = g_j \alpha I\}.$$
On the other hand, when $\ga g_i I = g_j \alpha I$, we have $\ga g_i I \subset g_j I w^k I$ and conversely, when $\ga g_i I \subset g_j I w^k I$ there exists a unique $\alpha \in \Omega_k$ such that $\ga g_i I = g_j \alpha I$. Therefore, we obtain
$$ (A_w^k)_{i,j} = \# \{ \ga \in \Ga,  \ga g_i I \subseteq g_j Iw^k I\}
=\# \{ \ga \in \Ga,  g_j^{-1} \ga g_i \in  Iw^k I\}
.$$
Consequently, we have
 $$ \tr( (A_w)^k)=\# \{ (g_iI, \ga), i=1\sim n, \ga \in \Ga, g_i^{-1}\ga g_i  \in I w^k I.\} = \# \scrP_k.$$
We summarize the above discussion in the following theorem..

\begin{theorem}
The cardinality of $\scrP_k$ is equal to the trace of $(A_{w})^k$.
\end{theorem}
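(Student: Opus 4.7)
The plan is to identify $\tr(A_w^k)$ with a count of pairs $(g_iI,\ga)$ that is precisely the definition of $\scrP_k$. The groundwork laid in the paragraph immediately preceding the statement already does most of this, so I will organize it as follows.

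First, I would clarify the meaning of $(A_w)^k$. Under the isomorphism between $H_q(W,S)$ and the Iwahori--Hecke algebra $H(G,\I)$, the generator $e_w$ corresponds to the double coset operator $\I w\I$. Because $w=w_i$ was chosen in Section~2 to satisfy property $(*)$, namely $\ell(w^k)=k\,\ell(w)$, the Hecke relation $e_{w_1}e_{w_2}=e_{w_1w_2}$ when $\ell(w_1w_2)=\ell(w_1)+\ell(w_2)$ yields $e_w^{\,k}=e_{w^k}$. Hence $(A_w)^k$ is the matrix, in the basis $\{f_1,\dots,f_n\}$ of characteristic functions of $\Ga g_i\I$, of the operator given by the double coset $\I w^k\I$.

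Next, I would choose a decomposition $\I w^k\I=\bigsqcup_{\alpha\in\Omega_k}\alpha \I$ into right $\I$-cosets, and compute the $(i,j)$-entry of the operator by counting the $\alpha$'s that send $g_j\I$ into the orbit $\Ga g_i\I$. This is precisely the bookkeeping the author performs, yielding
$$
(A_w^k)_{i,j} \;=\; \#\{(\alpha,\ga)\in\Omega_k\times\Ga : \ga g_i\I=g_j\alpha\I\}.
$$
The crucial point here is that the element $\ga\in\Ga$ implementing such an equality is unique: if $\ga'$ also satisfies $\ga' g_i\I=g_j\alpha\I$, then $\ga'\ga^{-1}\in\Ga\cap g_i\I g_i^{-1}$, and the latter is trivial since it is a discrete, compact, torsion-free subgroup of $G$. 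Converting the condition $\ga g_i\I=g_j\alpha\I$ into the single containment $g_j^{-1}\ga g_i\in \I w^k\I$ (using again that each right $\I$-coset inside $\I w^k\I$ is hit by a unique $\alpha\in\Omega_k$), we obtain
$$
(A_w^k)_{i,j} \;=\; \#\{\ga\in\Ga : g_j^{-1}\ga g_i\in \I w^k\I\}.
$$

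Finally, I would take the trace by setting $j=i$ and summing over $i$, which gives
$$
\tr(A_w^k) \;=\; \sum_{i=1}^n \#\{\ga\in\Ga : g_i^{-1}\ga g_i\in \I w^k\I\} \;=\; \#\scrP_k,
$$
where the last equality is just the definition of $\scrP_k$ combined with Proposition~\ref{prop1}, which shows that every element of $\scrP_k$ is determined by its $(\ga,g_i\scrC)$ component. The main conceptual obstacle has already been addressed in the preceding subsection, namely verifying that $\scrP_k$ is well-defined as a set of representatives (so that no overcounting occurs on the geometric side), and that the straight strip component is forced to be $\Conv(\langle\ga\rangle g_i\scrC)$. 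Given this, the argument reduces to the combinatorial matching above.
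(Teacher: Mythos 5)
Your proposal is correct and follows essentially the same route as the paper: identify $(A_w)^k$ with the operator of the double coset $\I w^k\I$, decompose into right $\I$-cosets, use the triviality of $\Ga\cap g_i\I g_i^{-1}$ to get uniqueness of $\ga$, and match the diagonal count with the definition of $\scrP_k$. The only (minor) difference is that you make explicit the step $e_w^{\,k}=e_{w^k}$ via property $(*)$, which the paper leaves implicit.
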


\subsection{Closed straight strips}
We shall regard closed pointed straight strips as analogues of closed geodesics with a fixed starting vertex in finite graphs. In this subsection, we define the concept of closed straight strips, which are an analogue of closed geodesics without a fixed starting vertex in finite graphs. Set
$$ \tilde{\scrP}=\{(\ga, g \scrT), g \in G, \gamma \in \Gamma, \gamma(g \scrT) = g \scrT , \ga\mid_{g \scrL} \mbox{is a positive translation}\}$$
and $(\ga_1, g_1 \scrT ) \sim (\ga_2, g_2 \scrT)$ if there exists some chambers $C_1$ and $C_2$ such that  $(\ga_1, C_1, g_1 \scrT ) \sim (\ga_2, C_2, g_2 \scrT)$.
Equivalence classes of $\tilde{\scrP}$  are called closed straight strips in $\scrB_\Ga$.

Now consider the canonical map $\sigma$ from $\scrP$ to $\tilde{\scrP}$ given by $\sigma((\ga, g\scrC, g \scrT)) = (\ga, g\scrT)$. By Lemma \ref{lemma31}, we have
$$ \sigma^{-1}((\ga, g\scrT)) = \{ (\ga, g w^k \scrC , g \scrT), k \in \Z\}.$$
Observe that the equivalence relation on $\tilde{\scrP}$ is induced form the equivalence relation on $\scrP$ and $\sigma$ induces a map $\tilde{\sigma}$ from $\scrP/\sim$ to $\tilde{\scrP}/\sim$, which is the map from the set of closed straight strips to the set of closed pointed straight strips by dropping pointed chambers. The size of the preimage of the equivalence class of $[(\ga, g\scrT)]$ is given by
$$  \# \tilde{\sigma}^{-1}( [(\ga, g\scrT)]) = \# \left( \{(\ga, g w^k \scrC , g \scrT), k \in \Z\}/\sim \right).$$
To evaluate the left-hand side of the above equation, recall that the setwise stabilizer $\Stab_\Ga(g\scrT)$ is a cyclic group containing $\ga$. Let $\ga_0$ be the unique generator of $\Stab_\Ga(g\scrT)$ satisfying
that $ \ga|_{g\scrL}$ is a positive translation. Then $\ga = (\ga_0)^m$ for some positive integer $m$ and
$\ga_0 g \scrC  = g w^{k_0} \scrC$ for some positive integer $k_0$.
Then
$$\left( \{(\ga, g w^k \scrC , g \scrT), k \in \Z\}/\sim \right) \, = \,\{(\ga, g w^k \scrC , g \scrT), k \in \Z\}/ \langle \ga_0 \rangle$$
which contains exactly $k_0$ elements. Note that one shall regard the closed strip $[(\ga, g \scrT)]$ {as} the closed strip $[(\ga_0, g \scrT)]$ repeated $m$-times in $\scrB_\Ga$. Besides, we call the closed straight strip represented by $(\ga_0, g \scrT)$ a primitive closed strip.
In the end, we summarize the above discussion as the following theorem.
\begin{theorem} \label{thm31}
Every closed straight strip is a repetition of some unique primitive closed straight strip. Moreover,
the number of closed pointed closed straight {strips} mapping to a given closed straight strip is equal to the length of its primitive closed straight strip.
\end{theorem}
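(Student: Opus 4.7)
The plan is to unpack the cyclic structure of the setwise stabiliser $\Stab_\Ga(g\scrT)$ and combine it with the fibre analysis of the forgetful map $\tilde{\sigma}\colon \scrP/{\sim} \to \tilde{\scrP}/{\sim}$ that has already been carried out in the paragraph preceding the theorem. Most of the ingredients are in place; the task is to organise them into the two assertions of the theorem.

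For the first assertion, I would note that given a representative $(\ga, g\scrT) \in \tilde{\scrP}$, the stabiliser $\Stab_\Ga(g\scrT)$ is cyclic with a unique generator $\ga_0$ whose restriction to $g\scrL$ is the minimal positive translation. Hence $\ga = \ga_0^m$ for a unique $m \geq 1$, so $[(\ga, g\scrT)]$ is the $m$-fold repetition of the primitive class $[(\ga_0, g\scrT)]$. For uniqueness, suppose $(\ga_0, g_1\scrT) \sim (\ga_0', g_2\scrT)$ with both representatives primitive, and choose $\delta \in \Ga$ with $\delta g_1\scrT = g_2\scrT$; conjugation by $\delta$ carries $\Stab_\Ga(g_1\scrT)$ onto $\Stab_\Ga(g_2\scrT)$ and preserves the positive-translation orientation of the middle line, hence it must send the distinguished generator to the distinguished generator. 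This gives $\ga_0' = \delta \ga_0 \delta^{-1}$ and therefore equality of the primitive classes.

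For the second assertion, I would count the fibre of $\tilde{\sigma}$ over $[(\ga, g\scrT)]$ using Lemma \ref{lemma31}: the preimage in $\scrP$ is $\{(\ga, gw^k\scrC, g\scrT) : k \in \Z\}$, and two such pointed triples are equivalent in $\scrP/{\sim}$ exactly when some element of $\Ga$ fixes $g\scrT$ setwise and sends $gw^k\scrC$ to $gw^{k'}\scrC$. Every such $\Ga$-element lies in $\langle \ga_0 \rangle$, and $\ga_0$ acts on the fibre by $gw^k\scrC \mapsto gw^{k+k_0}\scrC$, where $k_0 = \ell([(\ga_0, g\scrC, g\scrT)])$ is by definition the length of the primitive pointed strip. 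Hence equivalence within the fibre amounts to congruence modulo $k_0$, and the fibre has exactly $k_0$ elements. The main point requiring care is to confirm that moving to an equivalent representative in $\tilde{\scrP}$ cannot alter either the distinguished generator $\ga_0$ or the positive-translation direction, and that no $\Ga$-element taking $g\scrT$ to a different translate can identify two pointed triples within a single fibre; both follow from the faithful action of $\Stab_\Ga(g\scrT)$ on $g\scrT$ (a consequence of torsion-freeness of $\Ga$ and type-preservation of the $G$-action) together with the fact that the positive-translation condition is built into the definitions of $\scrP$ and $\tilde{\scrP}$ and is therefore preserved by all $\Ga$-equivalences.
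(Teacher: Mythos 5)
Your proposal is correct and follows essentially the same route as the paper: both arguments rest on the cyclicity of $\Stab_\Ga(g\scrT)$ with distinguished generator $\ga_0$, the decomposition $\ga=\ga_0^m$, and the identification of the fibre of $\tilde{\sigma}$ with $\{(\ga,gw^k\scrC,g\scrT)\}_{k\in\Z}/\langle\ga_0\rangle\cong\Z/k_0\Z$. The only difference is that you spell out the uniqueness of the primitive strip via conjugation of stabilisers, a point the paper leaves implicit; this is a welcome addition rather than a deviation.
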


\subsection{Zeta functions of straight strips}
Finally, for $i=1$ and 2, let us define the straight strip zeta function of type $w_i$ on $\scrB_\Ga$ as
$$ Z_{w_i}(\scrB_\Ga, u) = \exp\left( \sum_{n=1}^\infty \frac{N_n u^n}{n}\right)$$
where $N_n$ is the number of closed pointed straight strips of type $w_i$ of length $n$. Then we have
\begin{equation} \label{eq1}
Z_{w_i}(\scrB_\Ga, u) = \exp\left( \sum_{k=1}^\infty \frac{\tr((A_{w_i})^k)u^k}{k}\right) = \det(1- A_{w_i} u)^{-1}.
\end{equation}
One can also define the zeta function to be

$$ Z_{w_i}(\scrB_\Ga, u) = \prod_{p} (1- u^{l(p)})^{-1}$$
where $p$ runs through all closed primitive closed strips of type $w_i$. Then {by} the standard argument as the case of graph zeta function, Theorem \ref{thm31} implies these two definitions of zeta functions coincide.

{Finally, let $\rho = \pi_\Ga$ in the Corollary \ref{corollary1}. Note that by the definition of $H_i(\rho,u)$, we have  
$$ H_i(\rho,u)  = (I- A_{w_i} u^{\ell(w_i)})^{-1}.$$
Together with Eq.(\ref{eq1}), we have the following theorem.}

\begin{theorem}[Hashimoto-Ihara's formula for quotients of two dimensional buildings] \label{maintheorem2}
$$ \det \Alt(W)(\pi_\Ga,u)=Z_{w_1}(\scrB_\Ga,u^{\ell(w_1)})Z_{w_2}(\scrB_\Ga,u^{\ell(w_2)}). $$
\end{theorem}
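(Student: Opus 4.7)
The plan is to chain together three ingredients already established in the excerpt: Corollary \ref{corollary1}, the explicit identification of $H_i(\pi_\Ga,u)$ with a resolvent of $A_{w_i}$, and the zeta-function formula in Eq.~(\ref{eq1}).

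First I would specialise Corollary \ref{corollary1} to the representation $\rho=\pi_\Ga$ of $H_q(W,S)$, obtaining
\[
\det\Alt(W)(\pi_\Ga,u)\;=\;\det H_1(\pi_\Ga,u)\,\det H_2(\pi_\Ga,u).
\]
So the task is to recognise each factor $\det H_i(\pi_\Ga,u)$ as $Z_{w_i}(\scrB_\Ga,u^{\ell(w_i)})$. This is where the length property~$(*)$ from Section~3 is essential: because $\ell(w_i^k)=k\,\ell(w_i)$ for every $k\geq 0$, the Hecke relation $e_{w_i^k}=(e_{w_i})^k$ holds, so
\[
H_i(\pi_\Ga,u)\;=\;\sum_{k=0}^\infty \pi_\Ga(e_{w_i})^k\,u^{k\ell(w_i)}
\;=\;\bigl(I-A_{w_i}u^{\ell(w_i)}\bigr)^{-1},
\]
exactly as noted at the end of the preceding subsection. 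Taking determinants gives
\[
\det H_i(\pi_\Ga,u)\;=\;\det\bigl(I-A_{w_i}u^{\ell(w_i)}\bigr)^{-1}.
\]

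Next I would invoke Eq.~(\ref{eq1}), which expresses the straight-strip zeta function as $Z_{w_i}(\scrB_\Ga,u)=\det(1-A_{w_i}u)^{-1}$. Substituting $u\mapsto u^{\ell(w_i)}$ yields
\[
Z_{w_i}\bigl(\scrB_\Ga,u^{\ell(w_i)}\bigr)\;=\;\det\bigl(1-A_{w_i}u^{\ell(w_i)}\bigr)^{-1}\;=\;\det H_i(\pi_\Ga,u).
\]
Multiplying the $i=1$ and $i=2$ identities and combining with the specialised Corollary~\ref{corollary1} produces the desired equality of power series in $u$.

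There is essentially no hard step left: all the real work has been done in Sections~2--3 (the length-preserving factorisation of $W$ in Theorem~\ref{theorem3}, which gave Corollary~\ref{corollary1}) and in the counting arguments of this section (which gave Eq.~(\ref{eq1})). The only point worth a sentence of comment is the implicit use of property~$(*)$ of Section~3 to justify $e_{w_i^k}=(e_{w_i})^k$ in the Hecke algebra; this is precisely why the generator $w_1$ had to be redefined as $w_1'=s_3s_1s_2$ in type $\tilde G_2$, and that redefinition is consistent with the definition of $\scrT$ adopted at the start of Section~4 for the case $(G,i)=(G_2,1)$. With that caveat addressed, the chain of equalities above is the complete proof.
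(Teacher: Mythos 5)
Your proposal is correct and follows essentially the same route as the paper: specialise Corollary \ref{corollary1} to $\rho=\pi_\Ga$, identify $H_i(\pi_\Ga,u)=(I-A_{w_i}u^{\ell(w_i)})^{-1}$ using property $(*)$, and conclude via Eq.~(\ref{eq1}). Your explicit remark that $(*)$ is what licenses $e_{w_i^k}=(e_{w_i})^k$ (and hence the choice of $w_1'$ in type $\tilde G_2$) is a point the paper leaves implicit, but the argument is the same.
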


\section{Alternating products of Poincar\'{e} series}
Note that one can generalize the definition of straight strips to ``straight tubes" in the higher rank cases in the following manner. Once again one identifies the apartment with a real vector space and the origin with a special vertex $v$ , and then for each vertex other than $v$ with a corresponding vector $v_i$ one may consider the connected components of the complement of the union of all the hyperplanes which are invariant under translation by the vector $v_i$. In this way one obtains the straight tubes in the direction $v_i$ for each $i$, and one may study their stabilizers. We expect that there exists an analogue of Theorem \ref{maintheorem1} and Theorem \ref{maintheorem2} for higher ranks cases. More precisely, we have the following conjecture.
\begin{conj}
For an affine Weyl group $(W,S)$ of rank $n$ (and $|S|=n+1)$, there exist $w_1,\cdots, w_n \in W$ such that for $H_i=\{ w_i^k\}_{k \in \Z_{\geq 0}}$,
the following two identities holds.
\begin{enumerate}
\item $w_i$ is the generator of the stabilizer of some straight tube, and $\ell(w_i^k)=k\cdot\ell(w_i)$ for all integers $k\geq 0$.
\item $\Alt(W)(u)=H_1(u)\cdots H_n(u)$.
\item For any representation $\rho$ of $H_q(W,S)$,
there exists certain ordering of
$$\bigg\{ H_1(\rho,u),\cdots,  H_n(\rho,u)\bigg\}\bigcup \bigg\{ W_{I}(\rho,u)^{(-1)^{|I|}}, I \subsetneq S\bigg\}$$
such that their product under this ordering is equal to $W(\rho,u)$.
\end{enumerate}
\end{conj}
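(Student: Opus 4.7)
The plan is to emulate the rank-two strategy. First, I would fix a special vertex $v$ as the origin of the apartment and, for each vertex $v_i$ of the fundamental chamber distinct from $v$, identify the straight tube $\scrT_i$ in the direction $v_i$ as the connected component of $\scrC$ in the complement of all affine hyperplanes stable under translation by $v_i$ (allowing, as in the $\tilde{G}_2$ case, to pass to a $W$-translate so that the natural generator enjoys the good length property). The stabiliser of $\scrT_i$ acts on its central line by translations and reflections; exactly as in Section~\ref{stabilizer}, it is cyclic, and I would define $w_i$ to be the generator whose action on the line is the minimal positive translation. For property (1), one must show $\ell(w_i^k)=k\,\ell(w_i)$, which amounts to exhibiting a reduced expression of $w_i$ whose concatenation with itself remains reduced; geometrically this says the gallery from $\scrC$ to $w_i \scrC$ extends to a bi-infinite geodesic along the tube, which is forced by the translational nature of $w_i$ once the correct $W$-translate of the tube is chosen.

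For (2), I would compute $\Alt(W)(u)$ by Eq.~(\ref{formula1}) together with the standard Poincar\'e polynomials of the finite parabolics $W_I$, and then match the result against $\prod_i(1-u^{\ell(w_i)})^{-1}$. In the rank-two cases this matching was a direct calculation, but for general rank one can hope to invoke Bott's and Macdonald's closed form $W(u)=\prod_i (1-u^{d_i})/(1-u^{d_i-1})\cdot\prod_i(1-u^{d_i-1})^{-1}$ for the affine Weyl group Poincar\'e series in terms of the exponents $d_i$, reducing the identity to the numerical claim $\sum_i \ell(w_i) = \#\Phi^+$ (or a similar identity involving the exponents), which in turn should follow from interpreting each $\ell(w_i)$ as the number of walls a period of the axis crosses. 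Part (3), the twisted identity, will reduce via Theorem~\ref{theorem2} to the existence of a length-preserving factorisation $W = D_1\times\cdots\times D_m$ where each $D_j$ is either some $H_i$ or a set $W_{J/I}$ (or $W_{I\backslash J}$) of minimal coset representatives, such that the product of their Poincar\'e series telescopes to $\Alt(W)(u)\,H_1(u)\cdots H_n(u) = W(u)$.

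The main obstacle is constructing such a factorisation uniformly in rank. In rank two it was obtained by an explicit geometric tiling: one builds the chamber sets $\mathfrak{C}_{m+1}\subset\mathfrak{C}_m\subset\cdots\subset\mathfrak{C}_1$ by successively applying $D_m, D_{m-1},\dots,D_1$ and verifies pictorially that $\mathfrak{C}_1$ covers each $W$-orbit chamber exactly once. For arbitrary $n$ no canonical extension of this tiling is evident; the interplay between straight tubes in different directions becomes subtle, and the correct ordering of the $D_j$ appears to depend delicately on the type. A promising route is an inductive construction on $n$, peeling off one tube direction at a time against a hyperplane arrangement and using the partial order on alcoves coming from a Weyl-chamber at infinity; alternatively, one might look for a conceptual derivation via the Iwahori-Hecke algebra Poincar\'e identities of Macdonald or via Kazhdan--Lusztig cell theory, where an analogue of the Garside-type normal forms underlying our rank-two decomposition might be found. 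Verifying (3) for type $\tilde{A}_n$, where the combinatorics of minimal coset representatives is especially transparent, would be the natural first test of the conjecture.
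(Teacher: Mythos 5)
The statement you are addressing is stated in the paper as a \emph{conjecture}: the authors do not prove it, and the only evidence they supply is the computation (via Macdonald's formula and a case-by-case classification of sincere roots) that $\Alt(\tilde{W})(u)^{-1}$ is indeed a polynomial of the form $(1-u^{d_1})\cdots(1-u^{d_n})$ with explicit degrees $d_i$, which is merely a numerical consequence of part (2). So there is no proof in the paper to compare yours against, and your text should be judged as a proof on its own terms. It is not one: it is a programme, and you yourself flag that the decisive step is missing. Concretely, the gaps are these. For part (1), defining $w_i$ as the generator of $\Stab(\scrT_i)$ does not by itself give $\ell(w_i^k)=k\,\ell(w_i)$; already in rank two this fails for $w_1$ in type $\tilde{G}_2$ and the authors repair it by passing to the conjugate $s_1 w_1 s_1$, the generator of the stabiliser of a \emph{translated} tube. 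You allude to "passing to a $W$-translate", but give no argument that a suitable translate exists in general, and this is exactly the kind of case-dependent phenomenon that makes the higher-rank statement conjectural. For part (2), reducing the identity to "$\sum_i \ell(w_i)=\#\Phi^+$ or a similar identity" is insufficient: equality of $\prod_i(1-u^{\ell(w_i)})$ with $\prod_i(1-u^{d_i})$ forces the two multisets of exponents to coincide (compare cyclotomic factorisations), not merely their sums, and determining the multiset $\{d_i\}$ is precisely the sincere-root computation the paper carries out in Section 5 --- while determining the multiset $\{\ell(w_i)\}$ of tube-stabiliser lengths is not done anywhere, in the paper or in your proposal.

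For part (3), the entire content of the rank-two proof is the explicit length-preserving factorisation $W=D_1\times\cdots\times D_m$ interleaving the $H_i$ with minimal coset representatives, verified by the chamber-tiling pictures; you correctly identify that no uniform higher-rank construction is known and offer only directions one might explore (induction on tube directions, cell theory, normal forms). That is a research plan, not a proof, and the conjecture remains open after your argument exactly where it was open before it. If you want to make genuine progress, the concrete first steps would be: (i) compute the stabiliser generators $w_i$ of the straight tubes for $\tilde{A}_3$ and check their lengths against the table entry $4,4,4$; and (ii) attempt the factorisation of part (3) for $\tilde{A}_n$, as you suggest, where the minimal coset representatives are explicit.
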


Note that the above conjecture implies that
\begin{equation} \label{eq1}
\Alt(W)(u)\m = (1-u^{d_1}) \cdots (1-u^{d_n})
\end{equation}
where $d_i = \ell(w_i)$ are positive integers.

In the rest of the paper, we examine Eq. (\ref{eq1}) for  all affine Coxeter groups.

%Recall that for a Coxeter group $(W,S)$, we define
%$$ \Alt(W)(u)=  \prod_{I \subset S} W_I(u)^{(-1)^{|I|+|S|}}.$$
%which is a power series in $u$ with integer coefficients.

Let $R$ be an irreducible reduced crystallographic root system of rank $n$, and let $W$ be the Weyl group with generating set $S$, and let $\tilde{W}$ be the affine Weyl group with generating set $\tilde{S}$.
Let $h$ be the Coxeter number of $W$. The main result of this section is the following theorem.
\begin{theorem} \label{theorem2}
The power series $\Alt(\tilde{W})(u)\m$ is indeed a polynomial of the form
$$(1-u^{d_1})\cdots (1-u^{d_n})$$
where $d_i$ are integers with $n+1= d_1 \leq d_2 \leq \cdots \leq d_n \leq h$ as shown as in the following table.
\end{theorem}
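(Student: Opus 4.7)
The plan is to proceed by explicit computation case by case, going through each irreducible affine type $\tilde A_n, \tilde B_n, \tilde C_n, \tilde D_n, \tilde E_6, \tilde E_7, \tilde E_8, \tilde F_4, \tilde G_2$. The starting point is Bott's product formula for the Poincar\'e series of an affine Weyl group in terms of the exponents $m_1,\ldots,m_n$ of the underlying finite Weyl group $W$:
$$\tilde W(u)=\frac{W(u)}{\prod_{i=1}^{n}(1-u^{m_i})},\qquad W(u)=\prod_{i=1}^{n}\frac{1-u^{m_i+1}}{1-u}.$$
For any proper subset $I\subsetneq \tilde S$ the parabolic $W_I$ is a finite Coxeter group whose isomorphism type is read off from the subdiagram of the affine Dynkin diagram spanned by $I$, and whose Poincar\'e series is the product $\prod_{k}\frac{1-u^{e_k}}{1-u}$ over the degrees $e_k$ of its irreducible components.

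For each affine type I will enumerate the proper subsets $I\subsetneq \tilde S$ grouped by the Coxeter type of the induced subdiagram, substitute these expressions into
$$\Alt(\tilde W)(u)=\prod_{I\subseteq \tilde S}W_I(u)^{(-1)^{|I|+|\tilde S|}},$$
and collect the cyclotomic factors $1-u^j$ in the numerator and denominator. The rank two computations of Section \ref{sec-alt} already demonstrate the template: after massive cancellation between the numerator and the denominator only $n$ factors of the form $1-u^{d_i}$ survive. The equality $d_1=n+1$ comes from carefully tracking the net $(1-u)$-multiplicity contributed by the $n+1$ singletons $W_{\{s\}}=1+u$, while the bound $d_n\leq h$ is forced because the largest cyclotomic factor appearing in any $W_I(u)$ or in $\tilde W(u)$ via Bott's formula is $1-u^h$.

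As a consistency check at intermediate stages I will use the identity
$$\tilde W(u)^{-1}=\sum_{I\subsetneq \tilde S}\frac{(-1)^{|I|+|\tilde S|+1}}{W_I(u)},$$
which holds because $\tilde W$ is an infinite Coxeter group (this is Equation \eqref{formula1}). Once each type is verified the exponents $d_1\leq\cdots\leq d_n$ can be tabulated directly, which produces the table referenced in the statement.

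The main obstacle is the uniform bookkeeping for the classical families $\tilde A_n, \tilde B_n, \tilde C_n, \tilde D_n$, where both the number of proper subsets and the multiplicity of each Coxeter type in the enumeration depend on $n$. For $\tilde A_n$ the cyclic symmetry of the affine Dynkin diagram is helpful, as every connected subdiagram is of type $A_k$; for $\tilde B_n, \tilde C_n, \tilde D_n$ the asymmetric endpoints introduce several additional sub-cases, and the enumeration must be organised so that the cancellation of cyclotomic factors is transparent and can be carried out uniformly in $n$. The exceptional types $\tilde E_6, \tilde E_7, \tilde E_8, \tilde F_4$ reduce to finite case checks, most conveniently performed by listing all proper subsets of $\tilde S$ explicitly and simplifying the resulting rational function.
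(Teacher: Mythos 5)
Your plan is a genuinely different route from the paper's, and it is worth contrasting the two. The paper does not enumerate parabolic subdiagrams at all. Instead it invokes MacDonald's product formula, which writes $W_I(u)=\prod_{a\in R_I^+}\frac{1-u^{\mathrm{ht}(a)+1}}{1-u^{\mathrm{ht}(a)}}$ (and the analogous formula for the affine group with an extra $(1-u^h)^{-n}$ prefactor). Feeding this into $\Alt(\tilde W)(u)=\prod_I W_I(u)^{\mu(I)\mu(\tilde S)}$ and exchanging the order of the products, the inner sum over $I$ containing $I(a)$ is a M\"obius sum that vanishes unless $I(a)=\tilde S$, i.e.\ unless $a$ is a \emph{sincere} affine root. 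This collapses the alternating product to
$$\Alt(\tilde W)(u)=\frac{1}{(1-u^h)^n}\prod_{\substack{a\in P\\ I(a)=\tilde S}}\frac{1-u^{\mathrm{ht}(a)+1}}{1-u^{\mathrm{ht}(a)}},$$
and the cancellation you are worried about is handled structurally, uniformly in $n$: the only input needed is the list of heights of sincere roots, which is read off directly from Bourbaki's tables. Your route of enumerating all proper subdiagrams and their Coxeter types, then cancelling cyclotomic factors by hand, would in principle reach the same answer, but it confronts exactly the combinatorial explosion that the M\"obius inversion is designed to kill.

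There are also two points where your sketch, as written, is not yet a proof. First, the explanation that $d_1=n+1$ follows from ``tracking the net $(1-u)$-multiplicity contributed by the $n+1$ singletons'' is not correct as stated: the $(1-u)$-multiplicity alone cannot isolate a factor $(1-u^{n+1})$, since every parabolic and the affine group itself contribute many $(1-u)$ powers, and singletons contribute $1+u=\frac{1-u^2}{1-u}$, not $1-u^{n+1}$. Second, the argument that $d_n\leq h$ because ``the largest cyclotomic factor appearing is $1-u^h$'' presupposes the conclusion that $\Alt(\tilde W)(u)^{-1}$ factors into a product of $n$ terms $(1-u^{d_i})$; before the cancellation has been carried out, the rational function has both numerator and denominator factors, and a large denominator factor could in principle push $d_n$ above $h$. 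In the paper both bounds emerge automatically once the sincere-root heights are tabulated (sincere roots of $P$ of the form $1-a$ have $\mathrm{ht}(1-a)\geq n+1$, and the extra factor $(1-u^h)^{-n}$ caps things at $h$), so neither needs a separate argument. If you want to make your brute-force plan rigorous you would need to actually carry out the $n$-uniform enumeration for the classical families, which you correctly identify as the main obstacle; but you should either do that bookkeeping explicitly or adopt the MacDonald--M\"obius reduction, which is the idea your proposal is missing.
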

\begin{center}
 \begin{tabular}{|c|l|}
 \hline
 Type &$d_1, \cdots, d_n$ \\ [0.5ex]
 \hline \hline
 ${A_{n}}$ & $n+1, n+1, n+1, \cdots, n+1$ \\ \hline
 ${B_{n}}$ & $n+1 \sim 2n$ \\ \hline
 ${C_{n}}$ & $n+1 \sim  2n$ \\ \hline
 ${D_{n}}$ & $n+1 \sim 2n-2, 2n-2, 2n-2$ \\ \hline
 ${E_{6}}$ & $7, 9, 9, 11, 12, 12$ \\ \hline
 ${E_{7}}$ & $8, 10, 11, 13, 14, 17, 18$ \\ \hline
 ${E_{8}}$ & $9, 11, 13, 14, 17, 19, 23, 29$ \\ \hline
 ${F_{4}}$ & $5, 7, 8, 11$ \\ \hline
 ${G_{2}}$ & $3, 5$ \\ \hline
 \end{tabular}
\end{center}

\subsection{MacDonald's formula}
MacDonald \cite{Mac} derived a formula for Poincar\'{e} series of Weyl groups and affine Weyl groups in terms of positive roots. Let us recall his result.
Let $B=\{a_1,...,a_n\}$ be the set of  simple roots of $R$ and $\rho$ be the highest root.
Let $\tilde{R}$ the associated affine root system, with base $\tilde{B}=B \cup \{a_0\}$, where $a_0$ = $1 - \rho$. For an affine root $a = \sum c_i a_i$, all $c_i$ are integers and  all of the same sign,
Its height is defined as
$$ ht(a)= \sum_{i=0}^n c_i.$$
Consider the set
$$ P=\{a \in \tilde{R}, 0 < ht(a) < h\}=R^+ \cup \{1-a, a \in R^+\}$$
where $h$ is the Coxeter number which is equal to $ht(1)$.
Let $W$ be the Weyl group and $\tilde{W}$ be the affine Weyl group of $R$ with the generating set $S$ and $\tilde{S}$ respectively.
\begin{theorem}
The following identities hold.
$$W(u)= \prod_{a \in R^+} \frac{1-u^{ht(a)+1}}{1-u^{ht(a)}} \qquad \mbox{and} \qquad \tilde{W}(u)= \frac{1}{(1-u^h)^n} \prod_{a \in P} \frac{1-u^{ht(a)+1}}{1-u^{ht(a)}}.$$
\end{theorem}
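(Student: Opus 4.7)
The plan is to reduce both identities to two classical inputs: Chevalley's degree formula \(W(u)=\prod_{i=1}^n(1-u^{d_i})/(1-u)\), where \(d_i=e_i+1\) are the fundamental degrees and \(e_i\) the exponents of \(W\), together with the Shapiro--Steinberg theorem that the number \(m_k\) of positive roots of height \(k\) equals \(\#\{i:e_i\geq k\}\). For the affine statement I will additionally appeal to Bott's formula \(\tilde W(u)=W(u)\prod_{i=1}^n(1-u^{e_i})^{-1}\), which expresses the Poincar\'e series of \(\tilde W=W\ltimes Q^\vee\) in terms of the finite Weyl group and the exponents.

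For the finite Weyl group, I would first group the product by height, rewriting the right-hand side as \(\prod_{k=1}^{h-1}\bigl((1-u^{k+1})/(1-u^k)\bigr)^{m_k}\). Shifting the index \(k\mapsto k+1\) in the numerator collapses this into a telescoping product whose interior exponents are the consecutive differences \(m_{k-1}-m_k\). By Shapiro--Steinberg these differences equal the exponent multiplicities \(\#\{i:e_i=k-1\}\); together with the boundary values \(m_1=n\) and \(m_{h-1}=1\) the product reassembles as \(\prod_{i=1}^n(1-u^{e_i+1})/(1-u)\), which is \(W(u)\) by Chevalley.

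For the affine Weyl group, I would split \(P=R^+\sqcup\{\delta-a:a\in R^+\}\). The contribution from \(R^+\) reproduces \(W(u)\) by the preceding step. For the second piece, since \(ht(\delta-a)=h-ht(a)\), the contribution becomes \(\prod_{k=1}^{h-1}\bigl((1-u^{k+1})/(1-u^k)\bigr)^{m_{h-k}}\), to which the same telescoping procedure applies with \(m_{h-k}\) in place of \(m_k\). The palindromic symmetry \(e_i+e_{n+1-i}=h\) of the exponents then converts the resulting interior product, up to the boundary factors \((1-u^h)^n\) and \((1-u)^{-1}\), into \(\prod_{i=2}^n(1-u^{e_i})^{-1}\). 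Combining with the \(R^+\) contribution and dividing by \((1-u^h)^n\) yields \(W(u)\prod_{i=1}^n(1-u^{e_i})^{-1}\), which equals \(\tilde W(u)\) by Bott's formula.

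The main obstacle is the bookkeeping in this second telescoping: one must carefully convert multiplicities of height \(h-k\) into multiplicities of height \(k\) via the exponent involution \(e\mapsto h-e\), and correctly handle the endpoints \(k=1\) and \(k=h-1\) so that the missing factor \((1-u^{e_1})^{-1}=(1-u)^{-1}\) appears in the final product. The substantive combinatorial content, namely Shapiro--Steinberg's height partition of \(R^+\) and Bott's formula for \(\tilde W(u)\), is cited rather than reproved, since both are classical and appear in standard references on Weyl groups and affine root systems.
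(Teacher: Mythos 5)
Your argument is correct, but note that the paper does not prove this theorem at all: it is quoted verbatim from Macdonald's 1972 paper (the text says ``Let us recall his result''), so there is no in-paper proof to compare against. What you have supplied is a legitimate derivation of Macdonald's product formula from three classical inputs, and the bookkeeping you flag as the main obstacle does work out: in the finite case the telescoped interior exponents $m_{k-1}-m_k$ are the exponent multiplicities by the Shapiro--Steinberg/Kostant duality, the boundary term $(1-u^h)^{m_{h-1}}=(1-u^h)$ supplies the missing factor $1-u^{e_n+1}$, and $(1-u)^{-m_1}=(1-u)^{-n}$ gives Chevalley's denominator; in the affine case the substitution $j=h-k$ together with the palindromy $e_i+e_{n+1-i}=h$ turns the interior product into $\prod_{i=2}^n(1-u^{e_i})^{-1}$, and the boundary terms $(1-u^h)^{m_1}=(1-u^h)^n$ and $(1-u)^{-m_{h-1}}=(1-u)^{-1}=(1-u^{e_1})^{-1}$ complete Bott's formula. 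Two remarks. First, your reduction inverts the usual logical order: Macdonald's paper derives the height--exponent duality and (a refinement of) Bott's formula as consequences of the product formula, whereas you use them as inputs; this is not circular, since both have independent proofs (Kostant's principal $\mathfrak{sl}_2$ argument for the duality, Bott's topological proof and later combinatorial ones for the affine series), but it does mean your proof carries the full weight of those theorems rather than being self-contained. Second, your argument uses irreducibility of $R$ in two places (uniqueness of the highest root, so $m_{h-1}=1$, and uniqueness of the exponent equal to $1$); this matches the paper's standing hypothesis, but is worth stating explicitly.
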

Now for an affine root $a = \sum c_i a_i$, define
$$ I(a)=\{a_i \in \tilde{B}, c_i \neq 0\}.$$
For a subset $I$ of $\tilde{B}$, let $R_I$ be the sub-root-system of $\tilde{R}$ with base $I$ and $W_I$ be its Weyl group (and $W_I=\tilde{W}$ if $R_I=\tilde{R}$).
By abuse of the notation, we also the denote the set of generators of $W_I$ by $I$, which is a subset of $\tilde{S}$.

Note that for any $R_J \supset R_{I(a)}$, $a$ is always a root in the sub root system $R_J$. (See Proposition 24, Chapter VI.7 of \cite{Bou}.)
Moreover, positive roots in any proper sub-root-sytem are contained in $P$.

\subsection{Alternating products}
For a subset $I$ of $S$, consider  the M\"{o}bius function
$ \mu(I) = (-1)^{|I|}$ which satisfies the property
$$ \sum_{I \subset J \subset S} \mu(J) =
\begin{cases}
\mu(S) &,\mbox{ if }I=S;\\
0&,\mbox{ otherwise}.
\end{cases}
$$
Applying Macdonald's formula, for the Weyl group $W$,  we have
\begin{align*}
\Alt(W)(u) &= \prod_{I\subset S} W_{I}(u)^{\mu(I)\mu(S)}\\
&= \prod_{I\subset S} \prod_{a \in R_I^+} \left(\frac{1-u^{ht(a)+1}}{1-u^{ht(a)}} \right)^{\mu(I)\mu(S)} \\
&=\prod_{a\in R^+}\prod_{I(a)\subset J \subset S}\left( \frac{1-u^{ht(a)+1}}{1-u^{ht(a)}} \right)^{\mu(J)\mu(S)}\\
&=\prod_{a\in R^+,I(a)= S} \frac{1-u^{ht(a)+1}}{1-u^{ht(a)}} \\
\end{align*}
Similarly, for the affine Weyl group $\tilde{W}$, we have
\begin{align*}
\Alt(\tilde{W})(u) &= \prod_{I\subset \tilde{S}} W_{I}(u)^{\mu(I)\mu(\tilde{S})}\\
&=   \frac{1}{(1-u^h)^n}  \prod_{a \in P} \left(\frac{1-u^{ht(a)+1}}{1-u^{ht(a)}} \right) \cdot  \prod_{I\subsetneq \tilde{S}} \prod_{a \in R_I^+} \left(\frac{1-u^{ht(a)+1}}{1-u^{ht(a)}} \right)^{\mu(I)\mu(\tilde{S})} \\
&= \frac{1}{(1-u^h)^n} \prod_{a\in P}\prod_{I(a)\subset J \subset \tilde{S}}\left( \frac{1-u^{ht(a)+1}}{1-u^{ht(a)}} \right)^{\mu(J)\mu(\tilde{S})}\\
&= \frac{1}{(1-u^h)^n}  \prod_{a\in P,I(a)= \tilde{S}} \frac{1-u^{ht(a)+1}}{1-u^{ht(a)}}\\
&= \frac{1}{(1-u^h)^n}  \prod_{a\in R^+,I(1-a)= \tilde{S}} \frac{1-u^{h-ht(a)+1}}{1-u^{h-ht(a)}}.
\end{align*}
We summarize the above result as the following theorem.
\begin{theorem} \label{theorem3}
The following identities hold.
$$\Alt(W)(u)= \prod_{a\in R^+,I(a)= S} \frac{1-u^{ht(a)+1}}{1-u^{ht(a)}} $$
and
$$\Alt(\tilde{W})(u)= \frac{1}{(1-u^h)^n}  \prod_{a\in R^+,I(1-a)= \tilde{S}} \frac{1-u^{h-ht(a)+1}}{1-u^{h-ht(a)}}.$$
\end{theorem}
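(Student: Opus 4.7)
The plan is to apply MacDonald's product formulas to each factor $W_I(u)$ in the alternating product, interchange the order of the resulting double product (root first, subset second), and then exploit the elementary Möbius identity $\sum_{J\supseteq I}\mu(J)=\mu(S)\,[I=S]$ on the Boolean lattice of subsets of $S$ (respectively $\tilde S$).

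For the Weyl group identity I would first expand
$$\Alt(W)(u)=\prod_{I\subset S}\prod_{a\in R_I^+}\left(\frac{1-u^{ht(a)+1}}{1-u^{ht(a)}}\right)^{\mu(I)\mu(S)}.$$
By Bourbaki VI.7, Proposition 24, a given $a\in R^+$ belongs to $R_J^+$ exactly when $J\supseteq I(a)$, so interchanging the product gives an inner exponent $\mu(S)\sum_{J\supseteq I(a)}\mu(J)$. This sum equals $\mu(S)$ if $I(a)=S$ and vanishes otherwise; using $\mu(S)^2=1$ leaves precisely $\prod_{a\in R^+,\,I(a)=S}\frac{1-u^{ht(a)+1}}{1-u^{ht(a)}}$, which is the first claim.

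For the affine identity the strategy is identical, but I must merge two different shapes of MacDonald factor. Every proper parabolic $W_I$ with $I\subsetneq\tilde S$ is a finite Weyl group, so $R_I^+\subset R^+\subset P$ and MacDonald's finite formula applies without the $(1-u^h)^{-n}$ correction. Only the full factor $W_{\tilde S}=\tilde W$ contributes both the extra $(1-u^h)^{-n}$ and the larger index set $P$ in place of $R^+$. Since $a\in R_J^+$ (or $a\in P$ when $J=\tilde S$) is still equivalent to $J\supseteq I(a)$, the same Möbius cancellation isolates the exponent $[I(a)=\tilde S]$ on each affine root $a\in P$, together with the untouched $(1-u^h)^{-n}$. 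Now $a\in R^+\subset P$ has $I(a)\subseteq S\subsetneq\tilde S$, so these do not contribute; only the roots $a=1-b$ with $b\in R^+$ can satisfy $I(a)=\tilde S$, and the substitution $ht(1-b)=h-ht(b)$ converts the product into the form stated.

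The main obstacle is really just bookkeeping rather than substance: one has to unify the two different MacDonald expressions into a single product over $P$ so that the Möbius argument can be applied uniformly, and one must invoke the fact (used tacitly in the displayed formula for $\Alt(\tilde W)$) that positive roots of any proper sub-root-system of $\tilde R$ lie in $P$, i.e.\ have affine height strictly less than $h$. Once this observation is in hand the entire proof reduces to the inclusion--exclusion identity and the computation $ht(a_0+\rho-b)=h-ht(b)$.
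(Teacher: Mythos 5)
Your proposal is correct and follows essentially the same route as the paper: expand each $W_I(u)$ via MacDonald's formula, swap the order of the product using Bourbaki VI.7, Proposition~24, apply the M\"obius identity $\sum_{I\subseteq J\subseteq S}\mu(J)=\mu(S)[I=S]$ to isolate the sincere roots, and in the affine case use that positive roots of proper sub-root-systems lie in $P$ together with $ht(1-b)=h-ht(b)$. No substantive differences from the paper's argument.
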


\subsection{Sincere roots}
An affine root $a$  is called sincere in $R$  (resp. $\tilde{R}$) if $I(a)=B$ (resp. $I(a)=\tilde{B}$.)
One can easily find sincere roots from the complete list of positive roots (e.g. Appendix in \cite{Bou}) and obtain the following result.

\begin{center}
\begin{tabular}{|c|l|}
\hline
Type & Heights of sincere roots in $R^+$ \\
\hline
$A_n$ & $n$ \\
\hline
$B_n$ & $n \sim 2n-1$ \\
\hline
$C_n$ & $n \sim 2n-1$\\
\hline
$D_n$ & $n \sim 2n-3$\\
\hline
$E_6$& $6 \sim 8,8 \sim 11$\\
\hline
$E_7$& $7 \sim 13,9 \sim 17$\\
\hline
$E_8$& $8 \sim 19,10\sim 23, 12 \sim 29$\\
\hline
$F_4$& $4 \sim 7,6\sim 11$\\
\hline
$G_2$& $2 \sim 4$\\
\hline
\end{tabular}
\end{center}
Together with Theorem \ref{theorem3}, we obtain the following table.

\begin{center}
 \begin{tabular}{|c|l|}
 \hline
Type & $\Alt(W)(u)\m$ \\ [0.5ex]
 \hline
 $A_{n}$ & $\frac{1-u^{n+1}}{1-u^{n}}$ \\ \hline
 $B_{n}$ & $\frac{1-u^{2n}}{1-u^{n}}$ \\ \hline
 $D_{n}$ & $\frac{1-u^{2n-2}}{1-u^{n}}$ \\ \hline
 $E_{6}$ & $\frac{(1-u^{12})(1-u^{9})}{(1-u^{8})(1-u^{6})}$ \\ \hline
 $E_{7}$ & $\frac{(1-u^{18})(1-u^{14})}{(1-u^{9})(1-u^{7})}$ \\ \hline
 $E_{8}$ & $\frac{(1-u^{30})(1-u^{24})(1-u^{20})}{(1-u^{12})(1-u^{10})(1-u^{8})}$ \\ \hline
 $F_{4}$ & $\frac{(1-u^{12})(1-u^{8})}{(1-u^{6})(1-u^{4})}$ \\ \hline
 $G_{2}$ & $\frac{1-u^{5}}{1-u^{2}}$ \\ \hline
 \end{tabular}
 \end{center}
Next, consider sincere roots in $P$ which are of the form $1-a$ for some $a \in R^+$. By direct computation, one have
\begin{center}
\begin{tabular}{|c|c|l|}
\hline
Type & Coxeter number $h$ & heights of sincere roots in $P$ \\
\hline
$A_n$ & $n+1$ & no sincere roots\\
\hline
$B_n$ & $2n$ & $n+1 \sim 2n-1$ , $n+2 \sim 2n-1$, $\cdots$, $2n-1$ \\
\hline
$C_n$ &  $2n$ & $n+1 \sim 2n-1$ , $n+2 \sim 2n-1$, $\cdots$, $2n-1$ \\
\hline
$D_n$ &  $2n-2$ & $n+1 \sim 2n-3$ , $n+2 \sim 2n-3$, $\cdots$, $2n-3$ \\
\hline
$E_6$ & 12 & $7\sim11$, $9\sim11$, $9\sim11$, 11 \\
\hline
$E_7$ & 18 & $8\sim 17$, $10\sim 17$, $11\sim 17$, $13\sim 17$, $14\sim 17$, 17\\
\hline
$E_8$ & 30 & $9\sim 29$, $11\sim 29$, $13\sim 29$, $14\sim 29$, $17\sim 29$, $19\sim 29$, $23\sim 29$, 29 \\
\hline
$F_4$ & 12 & $5\sim 11$, $7 \sim 11$, $8 \sim 11$, 11 \\
\hline
$G_2$ & 6 & $3\sim 5$, 5 \\
\hline
\end{tabular}
\end{center}

Together with Theorem \ref{theorem3}, the proof of Theorem \ref{theorem2} is complete.

\newpage
\begin{bibdiv} \begin{biblist}

\bib{Bou}{book}{
   author={Bourbaki, Nicolas},
   title={Lie groups and Lie algebras. Chapters 4--6},
   series={Elements of Mathematics (Berlin)},
   publisher={Springer-Verlag, Berlin},
   date={2002},
   pages={xii+300},
}

\bib{Bru}{article}{
  author={Fran\c{c}ois Bruhat, Jacques Tits},
  title={Groupes r\'{e}ductifs sur un corps local I: Donn\'{e}es radicielles valu\'{e}es},
  journal={Publications Math\'{e}matiques de l'IH\'{E}S},
  volume={41},
  year={1972},
  pages={5--251},
}

\bib{DK}{article}{
   author={Deitmar, A.},
    author={Kang, M.-H.},
   title={Zeta functions of $\mathbb{F}_1$ buildings},
   journal={to appear in J. Math. Soc. Japan},
}

\bib{FLW}{article}{
   author={Fang, Y.},
   author={Li, W.-C. W.},
   author={Wang, C.-J.},
   title={The zeta functions of complexes from ${\rm Sp}(4)$},
   journal={Int. Math. Res. Not.},
   date={2013},
   number={4},
   pages={886--923},
}

\bib{Hum}{book}{
   author={Humphreys, James E.},
   title={Reflection groups and Coxeter groups},
   series={Cambridge Studies in Advanced Mathematics},
   volume={29},
   publisher={Cambridge University Press, Cambridge},
   date={1990},
   pages={xii+204},
}

\bib{Ha1}{article}{
   author={Hashimoto, Ki-ichiro},
   title={Zeta functions of finite graphs and representations of $p$-adic
   groups},
   conference={
      title={Automorphic forms and geometry of arithmetic varieties},
   },
   book={
      series={Adv. Stud. Pure Math.},
      volume={15},
      publisher={Academic Press, Boston, MA},
   },
   date={1989},
   pages={211--280},
}

\bib{Ha2}{article}{
   author={Hashimoto, K.},
   title={On zeta and L-functions for finite graphs},
   journal={ International Journal of Mathematics},
      volume={1},
   date={1990},
   pages={ 381-396},
}

\bib{Hof}{article}{
   author={Hoffman, J. William},
   title={Remarks on the zeta function of a graph},
   note={Dynamical systems and differential equations (Wilmington, NC,
   2002)},
   journal={Discrete Contin. Dyn. Syst.},
   date={2003},
   number={suppl.},
   pages={413--422},
}

\bib{Iha}{article}{
   author={Ihara, Y.},
   title={On discrete subgroups of the two by two projective linear group
   over ${\germ p}$-adic fields},
   journal={J. Math. Soc. Japan},
   volume={18},
   date={1966},
   pages={219--235},
}

\bib{KL1}{article}{
   author={Kang, M.-H.},
   author={Li, W.-C. W.},
   title={Zeta functions of complexes arising from ${\rm PGL}(3)$},
   journal={Adv. Math.},
   volume={256},
   date={2014},
   pages={46--103},
}

\bib{KL2}{article}{
   author={Kang, M.-H.},
   author={Li, W.-C. W.},
   title={Artin $L$-functions on finite quotients of ${\rm PGL}_3$},
   journal={ Int. Math. Res. Not. (2014),
doi: 10.1093/imrn/rnu215},
}

\bib{KLW}{article}{
   author={Kang, M.-H.},
   author={Li, W.-C. W.},
   author={Wang, C.-J.},
   title={Zeta and L-functions of finite quotients of apartments and buildings},
   journal={arXiv:1505.00902.},
}

\bib{Mac}{article}{
   author={Macdonald, I. G.},
   title={The Poincar\'e series of a Coxeter group},
   journal={Math. Ann.},
   volume={199},
   date={1972},
   pages={161--174},
}

\end{biblist} \end{bibdiv}

\end{document}